\newcommand{\dE}{\mathbb{E}}
\newcommand{\TXs}[1]{\overline{T}_{X,#1}}
\newcommand{\TYs}[1]{\overline{T}_{Y,#1}}
\newcommand{\muXs}[1]{\widetilde{\mu}_{X,#1}}
\newcommand{\muYs}[1]{\widetilde{\mu}_{Y,#1}}
\newcommand{\sIFR}[1]{\ensuremath{#1-}IFR}
\newcommand{\sDFR}[1]{\ensuremath{#1-}DFR}
\newcommand{\sIFRA}[1]{\ensuremath{#1-}IFRA}
\newcommand{\sNBU}[1]{\ensuremath{#1-}NBU}
\newcommand{\sNBUFR}[1]{\ensuremath{#1-}NBUFR}
\newcommand{\sNBAFR}[1]{\ensuremath{#1-}NBAFR}
\newtheorem{theorem}{Theorem}
\newtheorem{corollary}{Corollary}
\newtheorem{definition}[theorem]{Definition}
\newtheorem{lemma}[theorem]{Lemma}
\newtheorem{proposition}{Proposition}
\newtheorem{remark}{Remark}
\newenvironment{proof}[1][Proof]{\noindent\textbf{#1.} }{\ \rule{0.5em}{0.5em}}
\begin{document}

\title{Iterated failure rate monotonicity and ordering relations within Gamma and Weibull distributions}
\author{Idir ARAB and Paulo Eduardo Oliveira \\
CMUC, Department of Mathematics, University of Coimbra, Portugal\\
e-mail: idir@mat.uc.pt\\
paulo@mat.uc.pt}

\date{ }
\maketitle

\begin{abstract}
Stochastic ordering of distributions of random variables may be defined by the relative convexity of the tail functions. This has been extended to higher order stochastic orderings, by iteratively reassigning tail-weights. The actual verification of those stochastic orderings is not simple, as this depends on inverting distribution functions for which there may be no explicit expression. The iterative definition of distributions, of course, contributes to make that verification even harder. We have a look at the stochastic ordering, introducing a method that allows for explicit usage, applying it to the Gamma and Weibull distributions, giving a complete description of the order relations within each of those families.\\

\textbf{Keywords: }iterated monotonicity, iterated failure rate, convex order, sign variation.\\

\textbf{MSC: }60E15, 26A51.
\end{abstract}
\ \ *This work was partially supported by the Centre for Mathematics of the University of Coimbra -- UID/MAT/00324/2013, funded by the Portuguese Government through FCT/MEC and co-funded by the European Regional Development Fund through the Partnership Agreement PT2020.
\section{Introduction}
Ageing and ordering notions between random variables have long attracted the interest of a wide community. These notions raise intricate theoretical problems and have been widely used in applications in reliability, actuarial science psychology or economics (see, for example, Chandra and Roy~\cite{CR01}, Nanda, Singh, Misra and Paul~\cite{NSMP02}, Franco, Ruiz and Ruiz~\cite{FRR03}, Chechile~\cite{Che11}, Belzunce, Candel and Ruiz~\cite{BCR95,BCR98}, Colombo and Labrecciosa~\cite{CL12}, Veres-Ferrer and Pavia~\cite{VFP14}). A connection with risk function properties may be found through utility functions, which may be interpreted as distribution functions. Ageing notions are usually defined in terms of the monotonicity of the survival or of the failure rate functions, while orderings between random variables, or to be more precise, their distributions, use relationships between these type of functions. The simplest ordering notions are based on direct comparisons between the survival or the failure rate functions. More interesting ordering relations, generally known as convex orderings, compare the decrease rate of the tail functions through the relative convexity between the inverse tail functions. These convex orderings have been introduced by Hardy, Littlewood and P\'{o}lya~\cite{HarLittPol59}, and some more recent results may be found in Palmer~\cite{Pal03} or Rajba~\cite{Raj14}. This means that the actual verification of these relations for given families of distributions is, in general, not obvious if the characterization of the distribution function is not simple, as is the case, for example, of the Gamma distributions. A classical early reference on ageing and some ordering problems for random variables and also on applications to reliability is the book by Barlow and Proschan~\cite{BP75}. More recent references on ageing and ordering notions, describing a nice account of properties and relations, including convex order notions, may be found in Mashall and Olkin~\cite{Marshall} or Shaked and Shanthikumar~\cite{SS07}.

A classification of families of distributions with respect to ageing notions was studied in Deshpande, Kochar and Singh~\cite{Des86} or Deshpande, Singh, Bagai and Jain~\cite{Des90}. Some of the classifications were based on higher order stochastic dominance, defined through relations between distributions constructed by iteratively reassigning their tail-weights as measures for the tails, as described in Definitions~\ref{def:age} and \ref{DEF S-IFR} below.
These iterated relations were also studied by Averous and Meste~\cite{AM89}, giving an almost complete picture of the ageing notions classification. The main focus being in establishing an hierarchy among the ageing notions rather than being very much concerned with the calculatory aspects. Naturally, the computational side of the problem becomes increasingly difficult, as a result of iterating the distribution functions. This means that, in general, simple questions as deciding whether a given distribution satisfies the appropriate monotonicity property is not simple.

As what concerns ordering notions based in the tail-weight iterated distributions, a first classification study is found in Fagiuoli and Pellerey~\cite{FP93}. Again, the main concern is in establishing different relations between the several ordering notions, essentially with no explicit examples. The same problem, considering some new ordering notions was recently studied by Nanda, Hazra, Al-Mutairi and Ghitany~\cite{ASOK}. Once more, the main interest is in studying relations between the different orderings defined, with no examples. It is interesting to note that, although there is a vast literature on ordering (and ageing) notions, there actual verification of these relations is surprisingly difficult, even for the important and popular Gamma distribution (see, for example, Khaledi, Farsinezhadb and Kochar~\cite{KFK11} or Kochara and Xub~\cite{KX11} for recent results on ordering relations within the Gamma family of distributions).

We will look at ageing and convex ordering notions having in mind the purpose of introducing an actual computationally usable methodology to decide about the iterated failure rate monotonicity and ordering relations. The paper is organized as follows: Section~\ref{sec:def} introduces the iterated distributions, gives a closed representation and defines the ageing notions, Section~\ref{sec:mon} establishes the failure rate ageing for the Weibull and Gamma distributions, Section~\ref{sec:ord} defines the convex order relation the paper is studying and studies its characterization in terms that are computationally exploitable. Finally, on Section~\ref{sec:app} we use the previous results to give a complete classification of the order relations within the Gamma and Weibull families of distributions.

\section{Definitions and basic representations}
\label{sec:def}
Let $X$ be a nonnegative random variable with density function $f_X$, distribution function $F_X$, and tail function, or, as many authors call it, survival function, $\overline{F}_X=1-F_X$. We will be interested in ageing properties depending on iterated tail-weights for the distributions, as introduced by Averous and Meste~\cite{AM89} and initially studied by Fagiuoli and Pellerey~\cite{FP93}.
\begin{definition}
\label{def:s-iter}
For each $x\geq 0$, define
\begin{equation}
\TXs{0}(x)=f_{X}(x)\quad \mbox{and}\quad \muXs{0}=\int_0^\infty \TXs{0}(t)\,dt=1.
\end{equation}
For each $s\geq 1$, define the $s-$iterated distribution $T_{X,s}$ by their tails $\TXs{s}=1-T_{X,s}$ as follows:
\begin{equation}
\TXs{s}(x)=\frac{1}{\muXs{s-1}}\int_{x}^\infty\TXs{s-1}(t)\,dt
\quad \mbox{where}\quad \muXs{s}=\int_0^\infty \overline{T}_{X,s}(t)\,dt.
\end{equation}
Moreover, we extend the domain of definition of each $\TXs{s}$ by considering $\TXs{s}(x)=1$ for $x<0$.
\end{definition}

We will be using these iterated distributions to establish ageing properties of distributions and ageing relations between different distributions within the same family. Our main concern is to introduce and use a method that actually allows the derivation of properties for specific families of distributions. For this purpose we will be exploring a closed representation for the iterated distributions.
\begin{lemma}
\label{Simple T_s}
The tails $\TXs{s}$ may be represented as
\begin{equation}
\label{eq:simpleTs}
\overline{T}_{X,s}(x)=\frac{1}{\prod_{j=1}^{s-1}\muXs{j}}\int_x^\infty \frac{(t-x)^{s-1}}{(s-1)!}f_{X}(t)\,dt.
\end{equation}
\end{lemma}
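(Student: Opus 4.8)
The plan is to prove the representation~\eqref{eq:simpleTs} by induction on $s$, recognizing that the right-hand side is essentially the Cauchy formula for repeated integration applied to the density $f_X$. The recursion in Definition~\ref{def:s-iter} integrates the previous tail once and renormalizes, so each iteration should raise the power of $(t-x)$ by one and append one more factor $\muXs{s-1}$ to the denominator; induction is the natural way to track both effects.

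For the base case $s=1$, the product $\prod_{j=1}^{0}\muXs{j}$ is empty and equal to $1$, while the kernel reduces to $(t-x)^0/0!\,f_X(t)=f_X(t)$, so the right-hand side of~\eqref{eq:simpleTs} equals $\int_x^\infty f_X(t)\,dt=\overline{F}_X(x)$. On the other hand, Definition~\ref{def:s-iter} gives $\TXs{1}(x)=\frac{1}{\muXs{0}}\int_x^\infty\TXs{0}(t)\,dt=\int_x^\infty f_X(t)\,dt$, since $\muXs{0}=1$ and $\TXs{0}=f_X$. The two expressions agree.

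For the inductive step I would assume~\eqref{eq:simpleTs} at level $s-1$ and start from the defining recursion $\TXs{s}(x)=\frac{1}{\muXs{s-1}}\int_x^\infty\TXs{s-1}(u)\,du$. Substituting the inductive hypothesis for $\TXs{s-1}(u)$ and collecting the normalizing constants into $1/\prod_{j=1}^{s-1}\muXs{j}$ produces an iterated integral over the region $\{(u,t):u\ge x,\ t\ge u\}$, which I rewrite as $\{(u,t):t\ge x,\ x\le u\le t\}$. Exchanging the order of integration leaves the inner integral $\int_x^t \frac{(t-u)^{s-2}}{(s-2)!}\,du$, and the substitution $v=t-u$ evaluates this to $\frac{(t-x)^{s-1}}{(s-1)!}$, which is precisely the step that raises the exponent from $s-2$ to $s-1$ and reproduces the claimed kernel.

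The only point requiring attention is the justification of the interchange of integrals, but this is not a genuine obstacle: since $f_X$ and the polynomial kernel are nonnegative, Tonelli's theorem applies unconditionally, with no integrability hypothesis needed. The remaining work is the bookkeeping of the constants and the region of integration, both of which are routine, so I expect the argument to close cleanly once the exponent-raising identity for the inner integral is in place.
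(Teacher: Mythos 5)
Your proof is correct and is essentially the paper's own argument recast as a formal induction: the paper unrolls the recursion by successively substituting each $\TXs{j}$ by its integral representation and reversing the order of integration, which is precisely your inductive step (the Tonelli swap followed by the exponent-raising identity $\int_x^t \frac{(t-u)^{s-2}}{(s-2)!}\,du=\frac{(t-x)^{s-1}}{(s-1)!}$). The only difference is presentational, namely induction on $s$ with the closed form as hypothesis versus the paper's chain of substitutions indexed by the number of unrollings, so the two arguments coincide in substance.
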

\begin{proof}
Successively replacing each $\TXs{j}$, $j=s-1,\ldots,1$, by its integral representation and reversing the integration order, we have
\begin{eqnarray*}
\lefteqn{\TXs{s}(x)
  =\frac{1}{\muXs{s-1}}\int_{x}^{\infty}\frac{1}{\muXs{s-2}}\int_{t}^{\infty}\TXs{s-2}(u)\,du\,dt} \\
 & &=\frac{1}{\muXs{s-1}\muXs{s-2}}\int_x^\infty \int_{x}^{u}\TXs{s-2}(u)\,dt\,du \\
 & &=\frac{1}{\muXs{s-1}\muXs{s-2}}\int_{x}^{\infty}(u-x)\TXs{s-2}(u)\,du \\
 & &=\frac{1}{\muXs{s-1}\muXs{s-2}}\int_{x}^{\infty}(u-x)\frac{1}{\muXs{s-3}}\int_{u}^{\infty}\TXs{s-3}(t)\,dt\,du \\
 & &=\frac{1}{\muXs{s-1}\muXs{s-2}\muXs{s-3}}\int_{x}^\infty \frac{(t-x)^{2}}{2}\TXs{s-3}(t)\,dt \\
 & &=\cdots =\frac{1}{\prod\limits_{j=1}^{k}\muXs{s-j}}\int_{x}^\infty \frac{(t-x)^{k-1}}{(k-1)!}\TXs{s-k}(t)\,dt.
\end{eqnarray*}
So, finally, taking $k=s$, we obtain
$$
\TXs{s}(x)=\frac{1}{\prod\limits_{j=1}^{s}\muXs{s-j}}\int_{x}^\infty \frac{(t-x)^{s-1}}{(s-1)!}f_{X}(t)\,dt.
$$
To conclude the proof, just rewrite the indexing order on the product of the $\muXs{j}$.
\end{proof}
\begin{remark}
The representation (\ref{eq:simpleTs}) for $\TXs{s}$ establishes a link between the iterated distributions and stop-loss probability metrics used in actuarial models. We refer the interested reader to Rachev and R\"uschendorf~\cite{RR90} or Boutsikas and Vaggelatou~\cite{BV02} for further details.
\end{remark}
\begin{remark}
In (\ref{eq:simpleTs}), if we choose $x=0$ and take into account that $\TXs{s}(0)=1$, it
follows that
$$
\dE X^{s-1}=(s-1)!\prod_{j=0}^{s-1}\muXs{j}.
$$
Replacing this expression in (\ref{eq:simpleTs}), another representation of $\TXs{s}$ follows:
\begin{equation}
\label{eq:simple1}
\TXs{s}(x)=\frac{1}{\dE X^{s-1}}\int_x^\infty f_{X}(t)(t-x)^{s-1}\,dt.
\end{equation}
Moreover, it also follows an explicit expression for moments of the iterated distributions:
$$
\muXs{s-1}=\frac{1}{s-1}\frac{\dE X^{s-1}}{\dE X^{s-2}}.
$$
\end{remark}

We now discuss some definitions of ageing. One of the most simple and common ageing notion is based on the failure rate function of a distribution $\frac{f_X(x)}{1-F_X(x)}=\frac{\TXs{0}(x)}{\TXs{1}(x)}$. Even before getting into comparisons between probability distributions, studied later in this paper, the monotonicity of the failure rate function is a relevant property, satisfied by many common distributions. The direct verification of this monotonicity may not be a simple task, as for many distributions the tail does not have an explicit closed representation or, at least, not a manageable one. As we have defined iterated distributions it becomes now natural to proceed likewise with respect to the failure rate functions.
\begin{definition}
\label{def:s-fail}
For each $s\geq 1$ and $x\geq 0$, define the $s-$iterated failure rate function as
\begin{equation*}
r_{X,s}(x)=\frac{\overline{T}_{X,s-1}(x)}{\int_x^\infty\overline{T}_{X,s-1}(t)\,dt}
=\frac{\overline{T}_{X,s-1}(x)}{\widetilde{\mu}_{X,s-1}\overline{T}_{X,s}(x)}.
\end{equation*}
\end{definition}
It is obvious that for $s=1$, we find the failure rate of $X$:
$$
r_{X,1}(x)=\frac{\overline{T}_{X,0}(x)}{\widetilde{\mu }_{0}\overline{T}_{X,1}(x)}=%
\frac{f_{X}(x)}{1-F_{X}(x)}=\frac{f_{X}(x)}{\overline{F}_{X}(x)}.
$$
Thus the monotonicity of the failure rate is expressed as the monotonicity of $r_{X,1}$. We may extend this monotonicity notion by considering the $s-$iterared distribution, as done in Averous and Meste~\cite{AM89} and Fagiuoli and Pellerey~\cite{FP93}, among many other authors.
\begin{definition}
\label{def:age}
For $s=1,2,\ldots$, the nonnegative random variable $X$ is said to be
\begin{enumerate}
\item
\sIFR{s} (resp. \sDFR{s}) if $r_{X,s}$ is increasing (resp. decreasing) for $x\geq 0$.
\item
\sIFRA{s} if $\frac{1}{x}\int_0^x r_{X,s}(t)\,dt$ is increasing for $x>0$.
\item
\sNBU{s} if $\TXs{s}(x+t)\leq\TXs{s}(x)\TXs{s}(t)$, for all $x,t\geq 0$.
\item
\sNBUFR{s} if $r_{X,s}(0)\leq r_{X,s}(x)$, for all $x\geq 0$.
\item
\sNBAFR{s} if $r_{X,s}(0)\leq\frac{1}{x}\int_0^x r_{X,s}(t)\,dt$, for all $x>0$.
\end{enumerate}
\end{definition}
Throughout this paper we will be interested mainly in the \sIFR{s} notion. But, as proved in \cite{FP93}, this is the stronger notion. The following lemma states the relevant part, for the purposes of the present paper, of the relations between the above notions proved by Fagiuoli and Pellerey~\cite{FP93} (see their Figure~2 for an easily readable account of the relations proved).
\begin{lemma}
Let $X$ be a nonnegative random variable. Then, for each integer $s\geq 1$, the following implications hold: $X$ is \sIFR{s} $\Rightarrow$ $X$ is \sIFRA{s} $\Rightarrow$  $X$ is \sNBU{s} $\Rightarrow$ $X$ is \sNBUFR{s} $\Rightarrow$ $X$ is \sNBAFR{s}.
\end{lemma}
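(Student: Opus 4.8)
The plan is to reduce all five notions to classical statements about a single cumulative hazard function. The crucial observation is that $r_{X,s}$ is nothing but the ordinary failure rate of the tail $\TXs{s}$. Indeed, differentiating the defining relation $\TXs{s}(x)=\frac{1}{\muXs{s-1}}\int_x^\infty\TXs{s-1}(t)\,dt$ gives $\frac{d}{dx}\TXs{s}(x)=-\frac{1}{\muXs{s-1}}\TXs{s-1}(x)$, so that $r_{X,s}(x)=-\frac{d}{dx}\log\TXs{s}(x)$. Since $\TXs{s}(0)=1$, integrating yields $\TXs{s}(x)=\exp\!\left(-R(x)\right)$ with $R(x):=\int_0^x r_{X,s}(t)\,dt$. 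First I would record this identity, noting that $r_{X,s}$ is continuous (being a ratio of continuous functions, using continuity of $f_X$ in the case $s=1$ and of the integral $\TXs{s-1}$ for $s\ge 2$), so that $R$ is $C^1$ with $R'=r_{X,s}$.

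With this in hand, each notion becomes a standard property of $R$: \sIFR{s} says $r_{X,s}=R'$ is increasing; \sIFRA{s} says $R(x)/x$ is increasing; \sNBU{s} is equivalent, after taking logarithms in $\TXs{s}(x+t)\le\TXs{s}(x)\TXs{s}(t)$, to superadditivity $R(x+t)\ge R(x)+R(t)$; \sNBUFR{s} says $R'(0)\le R'(x)$; and \sNBAFR{s} says $R'(0)\le R(x)/x$. The four implications then follow from short analytic arguments. For \sIFR{s}$\Rightarrow$\sIFRA{s}, the substitution $R(\lambda x)=\lambda\int_0^x r_{X,s}(\lambda u)\,du\le\lambda R(x)$ for $\lambda\in(0,1)$, valid because $r_{X,s}$ is increasing, gives $R(\lambda x)/(\lambda x)\le R(x)/x$. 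For \sIFRA{s}$\Rightarrow$\sNBU{s}, the star-shapedness of $R$ (with $R(0)=0$) yields, writing $u=x+t$, both $R(x)\le\frac{x}{u}R(u)$ and $R(t)\le\frac{t}{u}R(u)$, whose sum is exactly $R(x+t)$. For \sNBU{s}$\Rightarrow$\sNBUFR{s}, superadditivity gives $R(x+h)-R(x)\ge R(h)-R(0)$; dividing by $h>0$ and letting $h\to 0^+$ produces $r_{X,s}(x)\ge r_{X,s}(0)$. Finally \sNBUFR{s}$\Rightarrow$\sNBAFR{s} is immediate: integrating $r_{X,s}(0)\le r_{X,s}(t)$ over $[0,x]$ and dividing by $x$ gives $r_{X,s}(0)\le R(x)/x$.

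The only step with genuine content is \sIFRA{s}$\Rightarrow$\sNBU{s}, i.e. the classical lemma that a star-shaped function vanishing at the origin is superadditive; this is where I would be most careful. The remaining three implications are essentially bookkeeping once the reduction $\TXs{s}=\exp(-R)$ is in place. I would also flag the mild regularity assumption needed in the case $s=1$ (continuity of the density $f_X$, so that $r_{X,1}$ is continuous and the fundamental-theorem-of-calculus manipulations are licit); for $s\ge 2$ the tails $\TXs{s}$ are automatically $C^1$, so no extra hypothesis is required.
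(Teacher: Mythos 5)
Your proposal is correct, but it takes a genuinely different route from the paper: the paper offers no proof of this lemma at all, simply quoting it as the relevant part of the implications established by Fagiuoli and Pellerey~\cite{FP93} (see their Figure~2). Your argument, by contrast, is self-contained. The key observation --- that $r_{X,s}$ is precisely the ordinary failure rate of the iterated tail, since $\frac{d}{dx}\TXs{s}(x)=-\frac{1}{\muXs{s-1}}\TXs{s-1}(x)$ yields $\TXs{s}(x)=\exp\left(-R(x)\right)$ with $R(x)=\int_0^x r_{X,s}(t)\,dt$ --- converts all five notions into classical properties of the cumulative hazard $R$, and your four steps (monotone rescaling for \sIFR{s}$\Rightarrow$\sIFRA{s}, star-shapedness implies superadditivity for \sIFRA{s}$\Rightarrow$\sNBU{s}, difference quotients for \sNBU{s}$\Rightarrow$\sNBUFR{s}, and integration of the pointwise bound for \sNBUFR{s}$\Rightarrow$\sNBAFR{s}) are the standard arguments and are sound. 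What the paper's citation buys is brevity and no need to discuss regularity; what your proof buys is transparency and independence from~\cite{FP93}, at the price of the mild hypotheses you honestly flag: continuity of $f_X$ when $s=1$ (so that $R'=r_{X,1}$ pointwise, which your \sNBU{s}$\Rightarrow$\sNBUFR{s} limit genuinely uses), while for $s\geq 2$ the tails are automatically continuously differentiable. One small point worth making explicit: the identity $\TXs{s}=e^{-R}$ and the ratio defining $r_{X,s}$ only make sense on the set where $\TXs{s}(x)>0$, so all statements should be read on that interval; this is harmless, since each of the five notions is only meaningful there as well.
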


\section{Iterated failure rate monotonicity}
\label{sec:mon}
The iterated failure rate property of a distribution turns out not adding much to the ageing notion. Indeed, it follows from the results in Fagiuoli and Pellerey~\cite{FP93}, that it is enough to verify that $X$ is either \sIFR{1} or the \sDFR{1}, as stated next.
\begin{lemma}
\label{IFR}
Let $X$ be a nonnegative random variable. For every integer $s\geq 1$, the following relations hold.
\begin{itemize}
  \item[a)] If $X$ is \sIFR{s}, then $X$ is \sIFR{(s+1)}.
  \item[b)] If $X$ is \sDFR{s}, then $X$ is \sDFR{(s+1)}.
\end{itemize}
\end{lemma}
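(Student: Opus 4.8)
The plan is to recognize that $r_{X,s}$ is nothing but the ordinary first-order failure rate of the tail function $\TXs{s}$, and then to reduce the statement to the classical fact that the increasing failure rate property is preserved under the integrated-tail (equilibrium) transform $\TXs{s}\mapsto\TXs{s+1}$. Proceeding this way lets me bypass the iterated definitions entirely after one initial identity and argue at the level of a single survival ratio.

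First I would record the identity linking the iterated failure rate to the iterated tail. Differentiating the defining relation $\TXs{s}(x)=\frac{1}{\muXs{s-1}}\int_x^\infty\TXs{s-1}(t)\,dt$ gives $\frac{d}{dx}\TXs{s}(x)=-\frac{1}{\muXs{s-1}}\TXs{s-1}(x)$, so that, comparing with Definition~\ref{def:s-fail},
\begin{equation*}
r_{X,s}(x)=\frac{\TXs{s-1}(x)}{\muXs{s-1}\TXs{s}(x)}=-\frac{d}{dx}\log\TXs{s}(x).
\end{equation*}
Since $\TXs{s}(0)=1$, integrating back yields $\TXs{s}(x)=\exp\!\left(-\int_0^x r_{X,s}(w)\,dw\right)$, valid for every $s\geq 1$. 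This is the key structural observation: the hypotheses \sIFR{s} and \sDFR{s} are precisely statements about the logarithmic derivative of $\TXs{s}$.

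Next I would write the $(s+1)$-iterated failure rate in a form that exposes its dependence on $r_{X,s}$. By Definition~\ref{def:s-fail}, $r_{X,s+1}(x)=\TXs{s}(x)\big/\int_x^\infty\TXs{s}(t)\,dt$, so its reciprocal is the mean-residual-life quantity
\begin{equation*}
m(x):=\frac{1}{r_{X,s+1}(x)}=\int_0^\infty\frac{\TXs{s}(x+u)}{\TXs{s}(x)}\,du
=\int_0^\infty\exp\!\left(-\int_0^u r_{X,s}(x+w)\,dw\right)du,
\end{equation*}
where the last equality uses the exponential representation above after the substitution $t=x+u$. The whole argument then turns on a pointwise comparison inside the integral: if $r_{X,s}$ is increasing, then for each fixed $u>0$ the inner integral $\int_0^u r_{X,s}(x+w)\,dw$ is nondecreasing in $x$, hence the integrand is nonincreasing in $x$, hence $m$ is nonincreasing, i.e.\ $r_{X,s+1}=1/m$ is increasing, which is part~a). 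Part~b) follows by reversing every inequality: $r_{X,s}$ decreasing makes each integrand nondecreasing in $x$, so $m$ is nondecreasing and $r_{X,s+1}$ is decreasing, covering both claims simultaneously.

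The computational core is routine; the step I expect to require the most care is the first one, namely justifying the identification $r_{X,s}=-\frac{d}{dx}\log\TXs{s}$ together with the attendant regularity (differentiation under the integral sign and absolute continuity of the $\TXs{s}$, which are integrals of the preceding tails and hence well behaved for all $s\geq 1$). I would also confirm that $m(x)$ is finite where it is used, i.e.\ that $\muXs{s}<\infty$, so that $r_{X,s+1}$ genuinely equals $1/m$ and the monotonicity transfers without degenerate cases; note, however, that even if $m$ were infinite the pointwise domination of the integrands would still yield the correct direction of monotonicity. Once the representation of $m$ as an average of the survival ratios $\TXs{s}(x+u)/\TXs{s}(x)$ is in place, the conclusion is immediate.
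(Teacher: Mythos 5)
Your proof is correct, but it takes a genuinely different route from the paper: the paper offers no argument at all here, disposing of the lemma as an ``immediate consequence'' of Theorems~3.4 and 4.3 of Fagiuoli and Pellerey~\cite{FP93}. Your argument is the classical equilibrium-distribution one: you identify $r_{X,s}$ as the ordinary failure rate of the tail $\TXs{s}$, so that \sIFR{s}-ness is exactly log-concavity of $\TXs{s}$, and then observe that $1/r_{X,s+1}$ is the mean residual life of the distribution with tail $\TXs{s}$, written as $\int_0^\infty \TXs{s}(x+u)/\TXs{s}(x)\,du$; log-concavity makes the integrand pointwise nonincreasing in $x$, and integration preserves this. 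This is the standard ``IFR implies DMRL'' argument lifted to the $s$-th iterate, and it treats a) and b) symmetrically. What the paper's citation buys is brevity and placement of the lemma inside the hierarchy of \cite{FP93}, whose own proofs run through total-positivity machinery; what your proof buys is a short, elementary, self-contained argument that spares the reader the external reference. Two small points worth tightening: the representation $\TXs{s}(x)=\exp\left(-\int_0^x r_{X,s}(w)\,dw\right)$ is valid only where $\TXs{s}(x)>0$, so the monotonicity claims should be read on the interior of the support (where the failure rates are defined at all); and the finiteness of the $\muXs{j}$, which you flag as needing confirmation, is implicitly assumed in Definition~\ref{def:s-iter} for the iterates to exist, so the degenerate case $m(x)=\infty$ can be excluded by that remark rather than by a separate argument.
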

\begin{proof}
This is an immediate consequence of Theorems~3.4 and 4.3 in \cite{FP93}. 
\end{proof}
We will now use this property to describe the failure rate monotonicity of the Weibull and the Gamma families of distributions.
We prove here the complete result for the iterated failure rate monotonicity of the Weibull and of the Gamma distributions.
\begin{theorem}
\label{weib:sIFR}
Let $X$ be a nonnegative random variable with Weibull distribution with shape parameter $\alpha$ and scale parameter $\theta$, and $s\geq 1$ an integer. If $\alpha\geq1$ (resp., $\alpha<1$), then $X$ is \sIFR{s} (resp., \sDFR{s}).
\end{theorem}
\begin{proof}
Taking into account Lemma~\ref{IFR} it is enough to consider the case $s=1$. Using the expression for the distribution function of $X$ it follows that the quotient $r_{X,1}(x)=\frac{f_X(x)}{\overline{F}_X(x)}=\frac{\alpha}{\theta}\left(\frac{x}{\theta}\right)^{\alpha-1}$, which is increasing if $\alpha\geq1$ and decreasing otherwise.
\end{proof}

We now handle the Gamma distributions. For this family of distributions, we cannot compute explicitly the failure rate function, as the distribution function has, in general, no closed form representation. So, we need a work around to prove the monotonicity. Let us start by stating, without proof, a simple but useful characterization of monotonicity.
\begin{lemma}
\label{lem:inc}
Let $g:\mathbb{R}\longrightarrow\mathbb{R}$. The function $g$ is increasing (resp. decreasing) if and only if for every $a\in\mathbb{R}$, $g(x)-a$ changes sign at most once
when $x$ traverses from $-\infty$ to $+\infty$, and if the change occurs it is in the order ``$-,+$'' (resp.``$+,-$'').
\end{lemma}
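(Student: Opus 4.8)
The plan is to establish the biconditional by proving each implication directly, treating the ``increasing'' statement in detail and obtaining the ``decreasing'' one by symmetry (replacing $g$ by $-g$, which interchanges the roles of the signs $+$ and $-$). I read ``increasing'' as non-decreasing, consistent with the convention needed for the boundary Weibull case $\alpha=1$, and I interpret ``$g(x)-a$ changes sign at most once'' through the usual sign-variation count in which arguments where $g(x)-a=0$ are disregarded. For the forward direction, assume $g$ is increasing and fix $a\in\mathbb{R}$; then $h(x)=g(x)-a$ is also increasing. The crucial observation is that an increasing function that is positive at some point stays positive thereafter: if $h(x_0)>0$ then $h(x)\ge h(x_0)>0$ for every $x\ge x_0$. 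Thus the set where $h$ is positive lies entirely to the right of the set where $h$ is negative, so as $x$ runs from $-\infty$ to $+\infty$ the sign of $h$ can switch from negative to positive but never back; hence there is at most one sign change and, if present, it has the order ``$-,+$''.

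For the reverse direction I would argue by contraposition. If $g$ fails to be increasing, there are points $x_1<x_2$ with $g(x_1)>g(x_2)$. Picking any value $a$ strictly between them, $g(x_2)<a<g(x_1)$, gives $g(x_1)-a>0$ and $g(x_2)-a<0$, so at the smaller argument $x_1$ the function $g(\cdot)-a$ is positive while at the larger argument $x_2$ it is negative. This forces a sign change in the forbidden order ``$+,-$'', contradicting the hypothesis and establishing that the sign condition implies monotonicity.

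I do not expect a substantial obstacle, since the result is elementary once the sign-variation conventions are pinned down; the single point deserving care is the treatment of the zeros of $g(\cdot)-a$. Declaring explicitly that such zeros are ignored in the count keeps the forward direction valid for weakly monotone $g$ (a function that is negative, then zero on an interval, then positive should still register as a single ``$-,+$'' change). With that convention fixed, both implications reduce to the two short arguments above, and the decreasing case follows verbatim after the substitution $g\mapsto-g$.
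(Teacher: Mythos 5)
Your proof is correct — and it is worth pointing out that the paper itself states this lemma explicitly \emph{without proof} (``Let us start by stating, without proof, a simple but useful characterization of monotonicity''), so there is no authorial argument to compare against; your write-up supplies exactly the missing elementary details. Both implications are handled properly: the forward direction rests on the observation that a non-decreasing function, once strictly positive, stays strictly positive, so after deleting zeros the sign sequence of $g(\cdot)-a$ is a block of minuses followed by a block of pluses; the converse goes by contraposition, choosing $a$ strictly between $g(x_2)<g(x_1)$ for $x_1<x_2$, which forces a ``$+$'' before a ``$-$'' and hence either two or more changes or a single change in the forbidden order. Your two points of interpretive care are precisely the right ones: ``increasing'' must be read as non-decreasing (otherwise constant functions, which trivially satisfy the sign condition, would be a counterexample — and the paper needs the weak reading, since it applies the lemma to the Gamma family, where $\alpha=1$ yields a constant failure rate), and zeros of $g(\cdot)-a$ must be discarded from the variation count, so that a function that is negative, vanishes on an interval, and then is positive still registers as a single ``$-,+$'' change. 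The reduction of the decreasing case to the increasing one via $g\mapsto -g$ is also sound, since $(-g)(x)-a=-\bigl(g(x)-(-a)\bigr)$ and $-a$ ranges over all of $\mathbb{R}$ as $a$ does, which interchanges the two sign orders as claimed.
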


\begin{theorem}
\label{Gamma:sIFR}
Let $X$ be a nonnegative random variable with distribution $\Gamma(\alpha,\theta)$, and $s\geq1$ an integer. If $\alpha\geq 1$, then $X$ is \sIFR{s}. If $\alpha<1$, then $X$ is \sDFR{s}.
\end{theorem}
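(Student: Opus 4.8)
The plan is to reduce everything to the case $s=1$ and then verify the monotonicity of the ordinary failure rate $r_{X,1}=f_X/\overline{F}_X$ through the sign-variation criterion of Lemma~\ref{lem:inc}. By Lemma~\ref{IFR}, if $X$ is \sIFR{1} then it is \sIFR{s} for every $s\geq 1$, and likewise for the DFR notion, so it suffices to treat $s=1$. Since the Gamma tail $\overline{F}_X$ has no usable closed form, I would not attempt to differentiate $r_{X,1}$ and chase the sign of its derivative directly. Instead I would exploit that $\overline{F}_X(x)>0$ on $[0,\infty)$, so that for every real $a$ the sign of $r_{X,1}(x)-a$ coincides with the sign of the auxiliary function $g_a(x):=f_X(x)-a\,\overline{F}_X(x)$. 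Lemma~\ref{lem:inc} then reduces the whole theorem to a statement about the number and order of sign changes of $g_a$.

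The advantage of $g_a$ is that it differentiates cleanly: since $\overline{F}_X'=-f_X$, we get $g_a'(x)=f_X'(x)+a\,f_X(x)=f_X(x)\bigl[(\log f_X)'(x)+a\bigr]$, and for the Gamma density $(\log f_X)'(x)=\frac{\alpha-1}{x}-\frac{1}{\theta}$. The crucial observation is that $x\mapsto(\log f_X)'(x)$ is monotone: decreasing when $\alpha\geq 1$ and increasing when $\alpha<1$. Hence, for each fixed $a$, the bracket $(\log f_X)'(x)+a$ changes sign at most once on $(0,\infty)$, so $g_a'$ changes sign at most once, and therefore $g_a$ is unimodal (one hump when $\alpha\geq 1$, one valley when $\alpha<1$).

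Unimodality alone only bounds the sign changes of $g_a$ by two, so the main work -- and the step I expect to be the real obstacle -- is to rule out the spurious double crossing (the pattern $-,+,-$ in the IFR case and $+,-,+$ in the DFR case) and to fix the correct order. Here I would bring in the two boundary evaluations: $g_a(\infty)=0$ because both $f_X$ and $\overline{F}_X$ vanish at infinity, and $g_a(0^+)=f_X(0^+)-a$, where $f_X(0^+)=0$ for $\alpha>1$ and $f_X(0^+)=+\infty$ for $\alpha<1$. Combining the location of the unique zero of $g_a'$ (which exists precisely when $a$ lies on the appropriate side of $1/\theta$) with the elementary fact that a strictly monotone function tending to $0$ keeps a constant sign, I would verify case by case on the value of $a$ that $g_a$ in fact changes sign at most once, and that the change, when present, is $-,+$ for $\alpha\geq 1$ and $+,-$ for $\alpha<1$. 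Lemma~\ref{lem:inc} then yields that $r_{X,1}$ is increasing (\sIFR{1}) when $\alpha\geq 1$ and decreasing (\sDFR{1}) when $\alpha<1$, and the reduction of the first paragraph upgrades this to all $s$. The degenerate value $\alpha=1$ (the exponential, with constant failure rate $1/\theta$) sits harmlessly in the IFR branch.
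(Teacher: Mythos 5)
Your proposal is correct and takes essentially the same approach as the paper: reduce to $s=1$ via Lemma~\ref{IFR}, then apply Lemma~\ref{lem:inc} to $g_a(x)=f_X(x)-a\,\overline{F}_X(x)$, whose derivative has at most one sign change (your logarithmic-derivative bracket is exactly the paper's linear factor $(a\theta-1)x-\theta(1-\alpha)$ up to a positive multiple), and pin down the sign pattern using the boundary values at $0$ and $+\infty$. Your explicit separate handling of $\alpha=1$ is in fact slightly more careful than the paper's, which tacitly assumes $f_X(0)=0$.
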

\begin{proof}
Again, from Lemma~\ref{IFR}, it follows that is enough to prove that $X$ is either \sIFR{1} or \sDFR{1}, that is, to prove the increasingness or decreasingness of the quotient $r_{X,1}(x)=\frac{f_{X}(x)}{\overline{F}_{X}(x)}$. Since, in general, there is no explicit closed form for $\overline{F}_{X}(x)$, we will prove the monotonicity using Lemma~\ref{lem:inc}. As $f_X$ and $\overline{F}_X$ are nonnegative, it is enough to take, while applying Lemma~\ref{lem:inc}, $a>0$. So, for every given $a>0$, we shall study the sign variation of $\frac{f_{X}(x)}{\overline{F}_{X}(x)}-a$. Remark that the sign of this difference coincides, for every $x\geq 0$, with the sign of $H(x)=f_{X}(x)-a\overline{F}_{X}(x)$, so it is enough to study the sign variation of $H$. It is obvious that $H(0)=-a<0$ and, if $\alpha\geq1$ we have $\lim_{x\rightarrow+\infty}H(x)=0$. Now, differentiating, we find that
$$
H^{\prime}(x)=f^{\prime}_{X}(x)+af_{X}(x)=\frac{x^{\alpha-2}e^{-x/\theta}}{\theta^{\alpha+1}\Gamma(\alpha)}((a\theta-1)x-\theta(1-\alpha)),
$$
so the sign of $H^\prime$ is determined by the sign of the straight line $\ell(x)=(a\theta-1)x-\theta(1-\alpha)$. Obviously $\ell(0)=-\theta(1-\alpha)$. Keeping in mind that we are assuming that $\alpha\geq 1$, it follows that $\ell(0)>0$, thus the sign variation of $\ell(x)$ in $[0,+\infty)$ is ``$+$'', if $(a\theta-1)>0$, or ``$+,-$'', if $(a\theta-1)<0$. In the first case, where $\ell(x)>0$, for all $x>0$, the function $H$ is always increasing so, given its value at 0 and at infinity, the sign of $H$ is ``$-$''. In the case where the sign variation of $\ell$ is ``$+,-$'', again taking into account the behaviour of $H$ at the origin and at infinity, implies that its sign variation is ``$-,+$''. The case $\alpha<1$ is analysed analogously by taking into account that $\lim_{x\rightarrow 0}H(x)=+\infty$.
\end{proof}
The \sIFR{s}-ness of the Gaussian distributions is proved in an analogous way. We state the result without proof.
\begin{theorem}
Gaussian distributions are \sIFR{s}.
\end{theorem}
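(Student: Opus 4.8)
The plan is to follow verbatim the strategy of the proof of Theorem~\ref{Gamma:sIFR}. By Lemma~\ref{IFR} it suffices to treat the case $s=1$, i.e. to show that the failure rate $r_{X,1}(x)=\frac{f_X(x)}{\overline{F}_X(x)}$ is increasing, the higher iterates then inheriting the IFR property automatically. Since a Gaussian variable is supported on all of $\mathbb{R}$, the sign-variation criterion of Lemma~\ref{lem:inc} will be run over the whole real line rather than over $[0,+\infty)$. First I would fix $a$ and, exactly as before, reduce the monotonicity of $r_{X,1}$ to the sign variation of
$$
H(x)=f_X(x)-a\,\overline{F}_X(x),
$$
which has the same sign as $r_{X,1}(x)-a$ because $\overline{F}_X>0$. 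As the failure rate is nonnegative, only the values $a>0$ can produce a sign change, so I would restrict to $a>0$.

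Next I would differentiate. Using $\overline{F}_X'=-f_X$ together with the Gaussian identity $f_X'(x)=-\frac{x-\mu}{\sigma^2}f_X(x)$, one obtains
$$
H'(x)=f_X'(x)+a\,f_X(x)=f_X(x)\,\ell(x),\qquad \ell(x)=a-\frac{x-\mu}{\sigma^2}.
$$
Because $f_X>0$, the sign of $H'$ is governed by the straight line $\ell$, which is strictly decreasing and hence changes sign exactly once, in the order ``$+,-$''. Thus $H$ increases up to $x_0=\mu+a\sigma^2$ and decreases thereafter, mirroring the role played by $\ell$ in the Gamma argument.

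It remains to pin down the global sign pattern from the boundary behaviour, and this is the only delicate point. At $-\infty$ we have $f_X\to 0$ and $\overline{F}_X\to 1$, so $H(-\infty)=-a<0$; at $+\infty$ both $f_X$ and $\overline{F}_X$ tend to $0$, so $H(+\infty)=0$. The key observation is that a function which is strictly decreasing on $(x_0,+\infty)$ and tends to $0$ there must remain strictly positive on that interval, forcing $H(x_0)>0$. Consequently, on $(-\infty,x_0)$ the function $H$ increases from the negative value $-a$ to the positive value $H(x_0)$ and crosses zero exactly once. Hence $H$ exhibits the single sign change ``$-,+$'', and Lemma~\ref{lem:inc} yields that $r_{X,1}$ is increasing; by Lemma~\ref{IFR} the distribution is \sIFR{s} for every $s\geq 1$.

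The main obstacle is precisely this last tail argument at $+\infty$: because $f_X$ and $\overline{F}_X$ vanish simultaneously there, one cannot read off the sign of $H$ directly from its limit, as one could at the origin in the Gamma case. The monotonicity-plus-vanishing-limit reasoning above is what closes the gap, and an equivalent route would be to invoke the Mills ratio asymptotics $\overline{F}_X(x)\sim \sigma^2 f_X(x)/(x-\mu)$, which shows that the hazard rate blows up and hence that $H(x)\to 0^{+}$. Everything else is a routine computation parallel to the Gamma proof.
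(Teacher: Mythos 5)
Your proof is correct, and it is precisely the ``analogous way'' the paper alludes to when it states this theorem without proof: reduce to $s=1$ via Lemma~\ref{IFR}, invoke the sign-variation criterion of Lemma~\ref{lem:inc}, and read the sign of $H(x)=f_X(x)-a\overline{F}_X(x)$ off the linear factor $\ell(x)=a-\frac{x-\mu}{\sigma^2}$ in $H'$, exactly as in the proof of Theorem~\ref{Gamma:sIFR}. The tail step you single out as delicate --- that $H$ strictly decreasing on $(x_0,+\infty)$ with limit $0$ forces $H>0$ there, hence the overall pattern ``$-,+$'' --- is the same reasoning the Gamma proof uses implicitly when it passes from the behaviour of $H$ ``at the origin and at infinity'' to its sign variation, so your write-up both matches and completes the intended argument.
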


\section{Iterated failure rate ordering} 
\label{sec:ord}
We now compare different distributions with respect to their iterated failure rate monotonicity rates. On the sequel, let $\mathcal{F}$ denote the family of distributions functions such that $F(0)=0$ and the corresponding probability distribution has support contained in $[0,+\infty)$. In this section, we will define an iterated failure rate order and prove a general criterium. We start by defining the ordering, following Nanda, Hazra, Al-Mutairi and Ghitany~\cite{ASOK}.
\begin{definition}
\label{DEF S-IFR}
Let $X$ and $Y$ be random variables with distribution functions $F_X,F_Y\in\mathcal{F}$ and  $s\geq 1$ an integer. The random variable $X$ (or its distribution $F_X$) is said to be more \sIFR{s} than $Y$ (or its distribution $F_Y$), and we write $X\leq_{\sIFR{s}}Y$, or equivalently, $F_X\leq_{\sIFR{s}}F_Y$, if $c_s(x)=\TYs{s}^{-1}(\TXs{s}(x))$ is convex.

\smallskip

\noindent
Moreover, two nonnegative random variables $X$ and $Y$, or two distribution functions $F_X,\,F_Y\in \mathcal{F}$, are said to be \sIFR{s} equivalent, denoted by $X\sim_{\sIFR{s}} Y$ or $F_X\sim_{\sIFR{s}} F_Y$, if there exists a constant $k>0$ such that $F_X(x)=F_Y(kx)$, for all $x\geq 0$.
\end{definition}
The \sIFR{s} relation between random variables, or their distributions to be more precise, is a variant of relative convexity between real functions $f_1$ and $f_2$, as defined by Hardy, Littlewood and P\'{o}lya~\cite{HarLittPol59}, Pe\v{c}ari\'{c}, Proschan and Tong~\cite{PecProschTong92} or Roberts and Varberg~\cite{RobVar73}. However, these authors define the relative convexity of $f_1$ with respect to $f_2$ requiring the convexity of $f_1(f_2^{-1}(x))$, that is, using the inverse functions in reversed order when compared to Definition~\ref{DEF S-IFR}. For some more recent results on the characterization of relative convexity we may refer the reader to Palmer~\cite{Pal03} or Rajba~\cite{Raj14}. These authors give some equivalent characterizations of relative convexity, but they all depend on the functions that are compared. So, if we do not have closed representations for these functions, as is the case for the distributions we will be analysing below, the effective calculation difficulty remains.

It is possible to define several other ordering relations corresponding to the different ageing notions referred in Definition~\ref{def:age}, as was done by Nanda, Hazra, Al-Mutairi and Ghitany~\cite{ASOK}. We will be only interested in the \sIFR{s} ordering, so we do not quote here those other ordering notions. Moreover, as happens for the ageing notions, the \sIFR{s} ordering is the strongest of those order relations, as proved in \cite{ASOK}. We have been referring to the \sIFR{s} as an ordering but, of course, one has to verify that this is really the case. This has been proved in~\cite{ASOK}.
\begin{lemma}[Theorem 2.1 in \cite{ASOK}]
\label{partial}
The relationship $F_X\leq_{\sIFR{s}}F_Y$ defines an order relation on the equivalence classes with respect to $\sim_{\sIFR{s}}$, of $\mathcal{F}$.
\end{lemma}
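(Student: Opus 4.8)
The plan is to verify the three defining properties of an order relation on equivalence classes: reflexivity, antisymmetry, and transitivity. Since the relation $\leq_{\sIFR{s}}$ is defined on distributions via the convexity of the composite map $c_s(x)=\TYs{s}^{-1}(\TXs{s}(x))$, but the statement claims it is an order relation on equivalence classes modulo $\sim_{\sIFR{s}}$, I must be careful throughout to track how the relation interacts with the equivalence, and in particular to confirm well-definedness on classes.

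First I would record the basic structural facts. Each tail $\TXs{s}$ is continuous and strictly decreasing on its support (being an iterated integral of a density as in Definition~\ref{def:s-iter}), running from $\TXs{s}(0)=1$ down to $0$; hence $\TXs{s}$ is a bijection onto $(0,1]$ and the inverse $\TYs{s}^{-1}$ used in $c_s$ is well defined. For reflexivity, note that $X\leq_{\sIFR{s}}X$ requires $\TXs{s}^{-1}(\TXs{s}(x))=x$ to be convex, which holds since it is the identity (an affine, hence convex, function). For the scaling equivalence $\sim_{\sIFR{s}}$, I would first compute how $\TXs{s}$ transforms under $F_X(x)=F_Y(kx)$: using representation~(\ref{eq:simpleTs}) and a change of variables one checks that the iterated tails scale in the variable, so that $c_s$ becomes an affine (linear) map, which is convex with convex inverse. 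This simultaneously shows that equivalent distributions are mutually comparable and that replacing $X$ or $Y$ by a $\sim_{\sIFR{s}}$-equivalent distribution only precomposes or postcomposes $c_s$ with an affine map, leaving convexity unchanged; this is precisely the well-definedness needed to descend the relation to equivalence classes.

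For transitivity, if $X\leq_{\sIFR{s}}Y$ and $Y\leq_{\sIFR{s}}Z$, then $\TYs{s}^{-1}\circ\TXs{s}$ and $\TZs{s}^{-1}\circ\TYs{s}$ are both convex and increasing (increasing because each tail is strictly decreasing, so the composite of two decreasing-then-inverted maps is increasing). The composite realizing $X\leq_{\sIFR{s}}Z$ is $\TZs{s}^{-1}\circ\TXs{s}=(\TZs{s}^{-1}\circ\TYs{s})\circ(\TYs{s}^{-1}\circ\TXs{s})$, and a composition of an increasing convex function with an increasing convex function is convex; this yields transitivity. Antisymmetry is where the equivalence classes become essential: if both $c_s$ and its inverse $c_s^{-1}=\TXs{s}^{-1}\circ\TYs{s}$ are convex and increasing, then $c_s$ is simultaneously convex and concave, hence affine; I would then translate the affine relation between the iterated tails back into an affine (in fact linear, using the normalization $\TXs{s}(0)=\TYs{s}(0)=1$ and the vanishing at infinity) relation between the original tails, recovering exactly $F_X(x)=F_Y(kx)$, i.e. $X\sim_{\sIFR{s}}Y$.

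The main obstacle I anticipate is the antisymmetry step, specifically converting the affineness of $c_s$ at the level of the $s$-iterated tails into the clean scaling identity $F_X(x)=F_Y(kx)$ for the original distribution functions. An affine relation $\TXs{s}(x)=a\,\TYs{s}(bx)+c$ must be pinned down by the boundary normalizations to force $a=1$ and $c=0$, giving $\TXs{s}(x)=\TYs{s}(bx)$; the remaining work is to show that equality of the $s$-iterated tails (up to scaling) propagates back, via differentiation of representation~(\ref{eq:simpleTs}) or by an inductive de-iteration argument, to equality of $\overline{F}_X$ and a scaled $\overline{F}_Y$, and hence to the stated equivalence. The rest of the argument is largely formal once the behaviour of $c_s$ under convexity and composition is in hand.
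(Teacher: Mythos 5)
First, a framing remark: the paper itself does not prove this lemma at all --- it is imported verbatim as Theorem~2.1 of \cite{ASOK} (``This has been proved in~\cite{ASOK}''), so your attempt is being measured against that reference's standard verification, not against anything in the text. Your skeleton --- reflexivity via the identity map, well-definedness on $\sim_{\sIFR{s}}$-classes, transitivity by composing increasing convex maps, antisymmetry via ``convex with convex inverse implies affine'' --- is exactly that standard route, and the first three steps are sound. In particular your scaling claim is correct: if $F_X(x)=F_Y(kx)$, a change of variables in (\ref{eq:simple1}) gives $\TXs{s}(x)=\TYs{s}(kx)$ exactly, so $c_s$ becomes linear and replacing $X$ or $Y$ by equivalent representatives composes $c_s$ with linear maps on either side, which preserves convexity. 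Moreover, the de-iteration step you flag as the main anticipated obstacle is in fact painless: differentiating $\TXs{s}(x)=\TYs{s}(kx)$ yields $\TXs{s-1}(x)=k\bigl(\muXs{s-1}/\muYs{s-1}\bigr)\TYs{s-1}(kx)$, and evaluating at $x=0$ shows the constant equals $1$, so induction descends cleanly to $\overline{F}_X(x)=\overline{F}_Y(kx)$.

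The genuine gap sits one step earlier, in forcing the affine map to be linear. Note first that affineness of $c_s$ gives $\TXs{s}(x)=\TYs{s}(\beta x+\gamma)$, with the affine map inside the argument, not $a\,\TYs{s}(bx)+c$ as you wrote. Evaluating at $x=0$ gives $\TYs{s}(\gamma)=1$, but this does \emph{not} force $\gamma=0$: the iterated tails are identically $1$ on $(-\infty,0]$ by the paper's own convention, and also on $[0,y_0]$ whenever the support of $Y$ starts at some $y_0>0$, which the definition of $\mathcal{F}$ permits (it only requires $F(0)=0$ and support contained in $[0,+\infty)$). This is not a removable difficulty. For $s=1$ take $X$ uniform on $[1,2]$ and $Y$ uniform on $[\tfrac12,\tfrac32]$: then $\overline{F}_X(x)=\overline{F}_Y(x-\tfrac12)$ for all $x\geq 0$, so (under the convention $\overline{F}_Y^{-1}(1)=\inf\{y:\overline{F}_Y(y)<1\}$, which is the only one making the relation even reflexive for such distributions) both $c_1$ and its inverse are piecewise linear with increasing slopes, hence convex; thus $X\leq_{\sIFR{1}}Y$ and $Y\leq_{\sIFR{1}}X$, yet no $k>0$ gives $F_X(x)=F_Y(kx)$. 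So antisymmetry modulo the purely multiplicative equivalence fails on $\mathcal{F}$ at the stated generality, and your normalization argument cannot be repaired without the implicit restriction --- satisfied by every distribution actually treated in this paper, and tacitly assumed in \cite{ASOK} --- that supports start at $0$, i.e.\ the tails are strictly decreasing on $[0,+\infty)$ where positive. Under that restriction your argument does close: $\TYs{s}(\gamma)=1$ forces $\gamma\leq 0$, and $\gamma<0$ would make $\TXs{s}\equiv 1$ on $[0,-\gamma/\beta]$, contradicting that the support of $X$ starts at $0$; hence $\gamma=0$ and the de-iteration above finishes the proof.
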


\begin{remark}
This order relation is indeed only partial, as shown by the following example. Consider $X$ with inverse gamma distribution with shape parameter $\alpha=1$ and scale parameter $\beta>0$, $Y$ with exponential distribution with scale parameter $1/\lambda$, and consider $s=1$. Then, we have:
$$
f_X(x)=\frac{\beta}{x^{2}}e^{-\beta/x},\qquad\TXs{1}(x)=\overline{F}_X(x)=1- e^{-\beta/x},\quad\mbox{and}\quad \TYs{s}(x)=e^{-\lambda x},
$$
so,
$$c_1(x)=-\frac{1}{\lambda}\log\frac{f_{X}(x)}{\overline{F}_{X}(x)}=-\frac{1}{\lambda}\log\frac{\beta e^{-\beta/x}}{x^{2}(1- e^{-\beta/x})},
$$
is neither convex nor concave, thus $X$ and $Y$ are not comparable with respect to \sIFR{1}.
\end{remark}
From Definition~\ref{DEF S-IFR} and Lemma~\ref{partial}, it follows immediately that the multiplying random variables by positive constants will not affect the \sIFR{s} ordering relation.
\begin{corollary}
\label{cor:scale}
Let $X$ and $Y$ be nonnegative random variables with distributions $F_X,F_Y\in\mathcal{F}$, $s\geq 1$ an integer and $\alpha_1,\alpha_2>0$. If $X\leq_{\sIFR{s}}Y$, then $\alpha_1 X\leq_{\sIFR{s}}\alpha_2 Y$.
\end{corollary}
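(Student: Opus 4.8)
The plan is to reduce the statement to two ingredients already available: the equivalence relation $\sim_{\sIFR{s}}$ of Definition~\ref{DEF S-IFR} and the fact, recorded in Lemma~\ref{partial}, that $\leq_{\sIFR{s}}$ is a genuine order relation \emph{on the equivalence classes} of $\sim_{\sIFR{s}}$. The latter means precisely that the relation is compatible with $\sim_{\sIFR{s}}$: replacing either random variable by a $\sim_{\sIFR{s}}$-equivalent one does not change whether the ordering holds. Thus it suffices to show that scaling by a positive constant produces a $\sim_{\sIFR{s}}$-equivalent variable.

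First I would verify that $\alpha X\sim_{\sIFR{s}}X$ for every $\alpha>0$. This is immediate from the definition of equivalence: since $F_{\alpha X}(x)=\prob{\alpha X\le x}=F_X(x/\alpha)$, taking $k=1/\alpha$ gives $F_{\alpha X}(x)=F_X(kx)$ for all $x\ge 0$, which is exactly the condition $\alpha X\sim_{\sIFR{s}}X$. The same argument gives $\alpha_2 Y\sim_{\sIFR{s}}Y$. Then, combining $\alpha_1 X\sim_{\sIFR{s}}X\le_{\sIFR{s}}Y\sim_{\sIFR{s}}\alpha_2 Y$ and invoking the compatibility supplied by Lemma~\ref{partial}, the conclusion $\alpha_1 X\le_{\sIFR{s}}\alpha_2 Y$ follows at once.

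As a self-contained alternative that bypasses the equivalence-class formalism, I would argue directly on the comparison function. The first step is a scaling law for the iterated tails: starting from the representation (\ref{eq:simple1}) and the substitution $t\mapsto t/\alpha$ in the integral, together with the moment identity $\dE(\alpha X)^{s-1}=\alpha^{s-1}\dE X^{s-1}$, one checks that $\overline{T}_{\alpha X,s}(x)=\TXs{s}(x/\alpha)$, and consequently $\overline{T}_{\alpha Y,s}^{-1}(w)=\alpha\,\TYs{s}^{-1}(w)$. Substituting these into the definition of the comparison function for $\alpha_1 X$ and $\alpha_2 Y$ yields $\widetilde{c}_s(x)=\alpha_2\,c_s(x/\alpha_1)$, where $c_s$ is the comparison function for $X$ and $Y$. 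Since $c_s$ is convex by hypothesis, and convexity is preserved under composition with the linear map $x\mapsto x/\alpha_1$ and under multiplication by the positive constant $\alpha_2$, the function $\widetilde{c}_s$ is convex, giving the claim.

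There is no serious obstacle here; the result is genuinely a corollary. If anything requires care, it is making explicit that ``order relation on the equivalence classes'' in Lemma~\ref{partial} carries the compatibility statement being used, and---on the direct route---performing the change of variables in (\ref{eq:simple1}) cleanly so that the normalising constants match up and the scaling law comes out exactly as $\overline{T}_{\alpha X,s}(x)=\TXs{s}(x/\alpha)$.
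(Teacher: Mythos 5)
Your first argument is exactly the paper's own: the paper offers no written proof, stating only that the corollary ``follows immediately'' from Definition~\ref{DEF S-IFR} and Lemma~\ref{partial}, which is precisely your observation that $F_{\alpha X}(x)=F_X(x/\alpha)$ makes $\alpha X\sim_{\sIFR{s}}X$ (with $k=1/\alpha$) and that the order being defined on $\sim_{\sIFR{s}}$-equivalence classes supplies the needed compatibility. Your self-contained alternative is also correct --- the scaling law $\overline{T}_{\alpha X,s}(x)=\TXs{s}(x/\alpha)$ follows cleanly from (\ref{eq:simple1}) with the substitution $t\mapsto t/\alpha$, giving $\widetilde{c}_s(x)=\alpha_2\,c_s(x/\alpha_1)$, which is convex --- and it has the minor merit of making explicit the computation the paper leaves entirely implicit, but it is not needed once the equivalence-class argument is in place.
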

The previous result will be useful to compare parametric distributions where there exists a scale parameter, as it follows that we may assume this parameter to be equal to 1.

The exponential distribution plays an important role when dealing with ageing notions. As already proved by Nanda, Hazra, Al-Mutairi and Ghitany~\cite{ASOK}, the \sIFR{s} comparability with the exponential is equivalent to the failure rate monotonicity.
\begin{theorem}[Theorem 2.2 in \cite{ASOK}]
\label{thm:expon}
Let $X$ be a random variable with distribution function $F_X\in\mathcal{F}$ and $Y$ with exponential distribution with scale parameter $1/\lambda$. Then $X\leq_{\sIFR{s}}Y$ (resp., $Y\leq_{\sIFR{s}}X$) if and only if $X$ is \sIFR{s} (resp., $X$ is \sDFR{s}).
\end{theorem}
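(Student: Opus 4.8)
The plan is to exploit the special structure of the exponential, namely that its tail is a fixed point of the iteration in Definition~\ref{def:s-iter}; this makes $\TYs{s}^{-1}$ completely explicit and collapses the composition $c_s$ into a logarithm whose derivative turns out to be exactly the iterated failure rate of $X$.

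First I would compute the iterated tails of $Y$. Since $\overline{F}_Y(x)=e^{-\lambda x}$, a short induction shows $\TYs{s}(x)=e^{-\lambda x}$ and $\muYs{s}=1/\lambda$ for every $s\geq 1$: if $\TYs{s-1}(x)=e^{-\lambda x}$, then $\muYs{s-1}=\int_0^\infty e^{-\lambda t}\,dt=1/\lambda$ and
$$
\TYs{s}(x)=\frac{1}{\muYs{s-1}}\int_x^\infty e^{-\lambda t}\,dt=e^{-\lambda x}.
$$
Thus $\TYs{s}$ is a strictly decreasing bijection of $[0,+\infty)$ onto $(0,1]$ with inverse $\TYs{s}^{-1}(u)=-\frac{1}{\lambda}\log u$, and by Definition~\ref{DEF S-IFR},
$$
c_s(x)=\TYs{s}^{-1}(\TXs{s}(x))=-\frac{1}{\lambda}\log\TXs{s}(x).
$$

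Next I would differentiate. Wherever $\TXs{s}>0$ the function $c_s$ is differentiable, and using the differentiated recursion $\TXs{s}'(x)=-\frac{1}{\muXs{s-1}}\TXs{s-1}(x)$ together with Definition~\ref{def:s-fail},
$$
c_s'(x)=-\frac{1}{\lambda}\frac{\TXs{s}'(x)}{\TXs{s}(x)}=\frac{1}{\lambda}\frac{\TXs{s-1}(x)}{\muXs{s-1}\TXs{s}(x)}=\frac{1}{\lambda}r_{X,s}(x).
$$
Since a differentiable function is convex precisely when its derivative is non-decreasing, $c_s$ is convex if and only if $r_{X,s}$ is increasing, that is, if and only if $X$ is \sIFR{s}; this is the first equivalence. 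For the second, I would avoid recomputing and instead note that both $\TXs{s}$ and $\TYs{s}$ are strictly decreasing bijections of $[0,+\infty)$ onto $(0,1]$, so $c_s$ is a strictly increasing bijection of $[0,+\infty)$ onto itself, and the map defining $Y\leq_{\sIFR{s}}X$, namely $\TXs{s}^{-1}(\TYs{s}(x))$, is exactly $c_s^{-1}$. For an increasing bijection, the inverse is convex if and only if the function is concave; hence $Y\leq_{\sIFR{s}}X$ holds if and only if $c_s$ is concave, i.e. $r_{X,s}=\lambda c_s'$ is decreasing, i.e. $X$ is \sDFR{s}.

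The computation is otherwise routine, and the only care needed is on regularity: one must ensure $\TXs{s}$ is strictly positive and strictly decreasing so that the logarithm and all inverses are well defined on $(0,1]$, and that $c_s$ is differentiable enough for the equivalence between convexity and monotonicity of $c_s'$ to apply --- the delicate case being $s=1$, where only $\TXs{1}'=-f_X$ is available. The main conceptual point, rather than a genuine obstacle, is recognising the fixed-point property of the exponential tail: it is precisely this that renders $\TYs{s}^{-1}$ explicit and converts the abstract convexity requirement into the concrete monotonicity of $r_{X,s}$.
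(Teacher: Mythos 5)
Your proposal is correct and follows essentially the same route as the paper: the paper's (one-line) proof likewise uses that the exponential tail is a fixed point of the iteration, so that $c_s(x)=-\frac{1}{\lambda}\log\TXs{s}(x)$ and convexity of $c_s$ amounts to concavity of $\log\TXs{s}$, i.e.\ monotonicity of $r_{X,s}$. You simply spell out what the paper leaves implicit — the induction giving $\TYs{s}(x)=e^{-\lambda x}$, the identity $c_s'=\frac{1}{\lambda}r_{X,s}$, and the inverse-function argument for the \sDFR{s} direction — all of which is sound.
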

\begin{proof}
We have $\TYs{s}=e^{-\lambda x}$, thus $F_X\leq_{\sIFR{s}}F_Y$ is equivalent to $\log(\TXs{s}(x))$ being concave, which is equivalent to requiring that $r_{X,s}(x)$ is increasing, that is, $X$ is \sIFR{s}.
\end{proof}

Note that this characterization provides an alternative way to our statements about the \sIFR{s}-ness of the Weibull and Gamma distributions (Theorems~\ref{weib:sIFR} and \ref{Gamma:sIFR} above). Of course, to use Theorem~\ref{thm:expon} we still need an effective way to compare distributions with respect to \sIFR{s} order relation, and this may not be a simple task. Indeed,
the direct verification of the convexity of $c_s$, stated in Definition~\ref{DEF S-IFR}, is in general difficult to perform, as we cannot find explicit closed representations of the distributions functions involved in the definition of $c_s$, thus we cannot invert $\TYs{s}$. One could try to use the characterization of the derivative of the inverse function for this purpose. This is exactly what was done in Proposition~2.2 in Nanda, Hazra, Al-Mutairi and Ghitany~\cite{ASOK} to obtain alternative characterizations for the \sIFR{s} ordering. But these alternatives are not really effective for actual computation purposes, as they all depend on monotonicity relations of transformations of the iterated distribution functions and their inverses. Thus, in all cases where no explicit closed representations is available, as for the Gamma family, we still have no effective way to conclude about the order relation. As already commented above, the characterizations proved by Palmer~\cite{Pal03} or Rajba~\cite{Raj14} do not help on this matter.

We shall start by proving an alternative characterization for the convexity of a continuous real function in terms of crossings of their graphical representations with straight lines.
\begin{theorem}
\label{thm:conv}
Let $f$ be a continuous function. The function $f$ is convex if and only if for every real numbers $a$ and $b$, $f(x)-(ax+b)$ changes sign at most twice
when $x$ traverses from $-\infty$ to $+\infty$, and if the change of sign occurs twice it is in the order ``$+,-,+$''.
\end{theorem}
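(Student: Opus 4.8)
The plan is to prove both implications, exploiting the single structural fact that subtracting an affine function from $f$ preserves convexity. This mirrors the proof pattern behind Lemma~\ref{lem:inc}, where subtracting a \emph{constant} preserved monotonicity; here the horizontal lines are replaced by arbitrary lines $ax+b$, and monotonicity of the slope (i.e. convexity) replaces monotonicity of $f$ itself.

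For the forward direction, I would assume $f$ convex, fix real numbers $a,b$, and set $h(x)=f(x)-(ax+b)$, which is again convex. The key observation is that the sublevel set $\{x\in\mathbb{R}:h(x)\le 0\}$ is convex, hence an interval $I$. Consequently $h>0$ on $\mathbb{R}\setminus I$ while $h\le 0$ on $I$, so as $x$ traverses from $-\infty$ to $+\infty$ the sign of $h$ runs through ``$+$'', then ``$\le 0$'', then ``$+$'' again. This forces at most two sign changes, and whenever two changes occur they appear precisely in the order ``$+,-,+$''.

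For the converse I would argue by contraposition. Suppose $f$ is not convex; then there exist $x_1<x_2<x_3$ such that the point $(x_2,f(x_2))$ lies strictly above the chord joining $(x_1,f(x_1))$ and $(x_3,f(x_3))$. Let $ax+b$ be the affine function whose graph is that chord, so that $h=f-(ax+b)$ satisfies $h(x_1)=h(x_3)=0$ and $h(x_2)>0$. Now I would lower the line slightly: for small $\varepsilon>0$, apply the hypothesis to the line $ax+(b+\varepsilon)$, whose associated difference $h_\varepsilon(x)=h(x)-\varepsilon$ is strictly negative at $x_1$ and $x_3$ and, by continuity, still strictly positive at $x_2$. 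Thus $h_\varepsilon$ realizes the sign pattern ``$-,+,-$'' at three ordered points, which by the intermediate value theorem produces at least two genuine sign changes, the first of which is from ``$-$'' to ``$+$''. This contradicts the hypothesis, which allows at most two changes and, in the case of exactly two, only the order ``$+,-,+$''.

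The hard part will be the bookkeeping of sign changes in this last step: one must check that the detected sub-pattern ``$-,+,-$'' is incompatible with every admissible configuration, whether the global number of sign changes equals two (in which case the order is wrong) or exceeds two (in which case there are too many). The role of continuity is to convert the three sampled values of $h_\varepsilon$ into actual crossings, and the perturbation by $\varepsilon$ is exactly what turns the two tangential zeros of $h$ at $x_1,x_3$ into strict sign reversals, so that the forbidden pattern genuinely appears.
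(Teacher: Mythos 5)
Your proposal is correct, and the harder (converse) direction coincides with the paper's own argument: both take the chord through the points witnessing non-convexity, shift it vertically by a small $\varepsilon>0$, and observe that the resulting difference exhibits the forbidden pattern ``$-,+,-$''. In fact your version of this step is slightly tighter than the paper's: you work directly from the three-point violation $x_1<x_2<x_3$ with $h(x_1)=h(x_3)=0$, $h(x_2)>0$, whereas the paper asserts the existence of a whole interval on which $f$ lies strictly above the chord --- a claim that is true for continuous $f$ but requires an extra (omitted) step of shrinking to consecutive zeros; your formulation needs no such justification, with continuity entering only through the intermediate value theorem to turn sampled signs into genuine crossings. Where you genuinely diverge is the forward direction: the paper argues by contraposition (a ``$-,+,-$'' pattern means $f$ rises above a line and then drops back under it, contradicting convexity), while you give a direct proof via the fact that the sublevel set $\{x: f(x)-(ax+b)\le 0\}$ of the convex function $f(x)-(ax+b)$ is an interval, so the sign sequence is forced to be ``$+$'', then nonpositive, then ``$+$''. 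This structural argument is cleaner and disposes at once of the bookkeeping about where zeros may occur, which the paper's informal crossing argument leaves implicit. One cosmetic slip: replacing $b$ by $b+\varepsilon$ \emph{raises} the line rather than lowering it (it is the difference $h_\varepsilon=h-\varepsilon$ that moves down); the formula you use is the right one, so nothing in the argument is affected.
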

\begin{proof}
Assume that $f(x)-(ax+b)$ changes sign more than twice or in the order ``$-,+,-$''. In both cases there exists an interval where the sign change sequence is in the order ``$-,+,-$''. But this means that the function $f$ is not convex as, after getting above a straight line it crosses again under the same straight line.

Assume now that $f$ is not convex, then there exists an interval $I=[x_0,x_1]$ such that $f(x)>\frac{f(x_1)-f(x_0)}{x_1-x_0}(x-x_0)+f(x_0)$, for all $x\in(x_0,x_1)$, that is, the graph of $f$ is, for $x\in I$, above the line $\Delta$ defined by $(x_{0},f(x_{0}))$ and $(x_{1},f(x_{1}))$. Let  $\Delta_\varepsilon$ be the line obtained by shifting upwards $\Delta$ by $\varepsilon$, described $y=\frac{f(x_1)-f(x_0)}{x_1-x_0}(x-x_0)+f(x_0)+\varepsilon$. It is obvious that, at least for $\varepsilon$ small enough, the sign variation of $f(x)-\left(\frac{f(x_1)-f(x_0)}{x_1-x_0}(x-x_0)+f(x_0)+\varepsilon\right)$, for $x\in I$, is at least in the order ``$-,+,-$''.
\end{proof}
To complement the previous result, the following characterization of the crossing of two graphical representations will be useful.
\begin{lemma}[Marshal and Olkin~\cite{Marshall}, pp. 699--700]
\label{lem:marshall}
Let $f$ and $g$ two real-valued functions, and $\zeta$ be a strictly increasing (resp., decreasing) and continuous function defined on the range of $f$ and $g$. For any real number $c>0$, the functions $f(x)-cg(x)$ and $\zeta(f(x))-\zeta(cg(x))$ have the same (resp., reverse) sign variation order as $x$ traverses from $-\infty$ to $+\infty$.
\end{lemma}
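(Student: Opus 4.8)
The plan is to reduce the assertion to a purely pointwise comparison, because strict monotonicity of $\zeta$ completely controls the sign of the difference at each individual point and nothing else is really involved. First I would fix an arbitrary $x$ in the common domain and regard $f(x)$ and $cg(x)$ as two real numbers lying in the domain of $\zeta$. The essential fact is that a strictly increasing function preserves strict order: for any two reals $u,v$ in its domain, the three relations $u>v$, $u=v$, $u<v$ are equivalent, respectively, to $\zeta(u)>\zeta(v)$, $\zeta(u)=\zeta(v)$, $\zeta(u)<\zeta(v)$. Applying this with $u=f(x)$ and $v=cg(x)$ yields, for every $x$,
$$
\mathrm{sign}\bigl(\zeta(f(x))-\zeta(cg(x))\bigr)=\mathrm{sign}\bigl(f(x)-cg(x)\bigr).
$$
In the decreasing case the same three equivalences hold with the inequalities reversed, so the sign is negated at each point.

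Once this pointwise identity is in hand, the conclusion about sign variation follows with no further analysis. Since $x\mapsto f(x)-cg(x)$ and $x\mapsto \zeta(f(x))-\zeta(cg(x))$ carry exactly the same sign at every $x$ (and, in particular, vanish at exactly the same points), the ordered sequence of signs recorded as $x$ runs from $-\infty$ to $+\infty$ is literally the same sequence; hence the number and the order of the sign changes coincide. For strictly decreasing $\zeta$ each recorded sign is replaced by its opposite, so a pattern such as ``$+,-,+$'' becomes ``$-,+,-$'', which is precisely the reversed sign variation order asserted.

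The only matters requiring care are bookkeeping and definitions rather than any genuine difficulty. I would make explicit that the zero sets of the two functions coincide, so that no spurious sign change is created or lost, and I would state clearly what ``reverse sign variation order'' means in the decreasing case, namely the entrywise negation of the sign sequence. I would also remark that continuity of $\zeta$ is not actually needed for the pointwise sign equivalence, as strict monotonicity alone suffices; its role is only to ensure that $\zeta$ is well defined and order preserving on a full interval. The single mild obstacle is precisely this domain issue: the lemma supplies $\zeta$ on the range of $f$ and $g$, yet it is evaluated at $cg(x)$, which for $c\neq1$ need not lie in the range of $g$. I would therefore take the domain of $\zeta$ to be an interval containing the range of $f$ together with the scaled range $\{cg(x)\}$, so that the substitution $u=f(x)$, $v=cg(x)$ is always legitimate, and verify that this is what the intended application requires.
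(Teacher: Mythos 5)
Your proof is correct: the paper states this lemma without proof (it is imported from Marshall and Olkin), and your pointwise order-preservation argument --- $\mathrm{sign}\bigl(\zeta(u)-\zeta(v)\bigr)=\mathrm{sign}(u-v)$ for strictly increasing $\zeta$, applied with $u=f(x)$, $v=cg(x)$, so that the two sign sequences coincide, or are negated entrywise when $\zeta$ is decreasing --- is exactly the standard argument behind the cited result. Your two side remarks are also accurate refinements: continuity of $\zeta$ plays no role in this conclusion, and the hypothesis should really require $\zeta$ to be defined on a set containing $\{cg(x)\}$ (e.g.\ an interval containing the ranges of $f$ and $cg$), not merely the range of $g$, which is indeed how the lemma is used in the paper (with $\zeta=\log$ defined on all of $(0,\infty)$).
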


The previous results provide an immediate and simple alternative characterization of \sIFR{s} order relation.
\begin{theorem}
\label{convexity-equivalence}
Let $X$ and $Y$ be random variables with distribution functions $F_X,F_Y\in\mathcal{F}$. $X<_{\sIFR{s}}Y$ if and only if for any real numbers $a$ and $b$, $\TYs{s}(x)-\TXs{s}(ax+b)$ changes sign at most twice, and if the change of signs occurs twice, it is in the order ``$+,-,+$'', as $x$ traverses from $0$ to $+\infty$.
\end{theorem}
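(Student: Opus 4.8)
The plan is to pivot from the convexity of $c_s=\TYs{s}^{-1}\circ\TXs{s}$ to the concavity of its inverse, and then to translate all sign information through the strictly decreasing map $\TXs{s}^{-1}$. First I would observe that, since both $\TXs{s}$ and $\TYs{s}^{-1}$ are strictly decreasing on $[0,+\infty)$, their composition $c_s$ is strictly increasing; it is therefore a genuine increasing bijection of $[0,+\infty)$ onto itself fixing $0$, with inverse $c_s^{-1}=\TXs{s}^{-1}\circ\TYs{s}$. For such a bijection the standard equivalence ``increasing and convex $\Leftrightarrow$ inverse increasing and concave'' applies, so the defining condition $X<_{\sIFR{s}}Y$ (convexity of $c_s$) is equivalent to the concavity of $c_s^{-1}$. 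This is the reformulation I want, because $c_s^{-1}$ has the same variable $x$ in it as the tail differences appearing in the statement.

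Next I would compute, for fixed reals $a,b$ and $x\geq 0$, the sign of $\TYs{s}(x)-\TXs{s}(ax+b)$. Applying the strictly decreasing continuous function $\zeta=\TXs{s}^{-1}$ and invoking Lemma~\ref{lem:marshall} with $c=1$, to $f(x)=\TYs{s}(x)$ and $g(x)=\TXs{s}(ax+b)$, the difference $\TYs{s}(x)-\TXs{s}(ax+b)$ has reverse sign variation to $\zeta(\TYs{s}(x))-\zeta(\TXs{s}(ax+b))=c_s^{-1}(x)-(ax+b)$ as $x$ runs over $[0,+\infty)$. (Equivalently, one gets this pointwise: $\TYs{s}(x)>\TXs{s}(ax+b)$ iff, applying the decreasing $\TXs{s}^{-1}$, $c_s^{-1}(x)<ax+b$.) Consequently, $\TYs{s}(x)-\TXs{s}(ax+b)$ changes sign at most twice in the order ``$+,-,+$'' exactly when $c_s^{-1}(x)-(ax+b)$ changes sign at most twice in the reversed order ``$-,+,-$''.

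Finally I would feed this into Theorem~\ref{thm:conv} applied to $-c_s^{-1}$: the function $c_s^{-1}$ is concave iff $-c_s^{-1}$ is convex iff, for every line $ax+b$, the difference $-c_s^{-1}(x)-(ax+b)$ changes sign at most twice with ``$+,-,+$'' if twice, which is the same as saying $c_s^{-1}(x)-(ax+b)$ changes sign at most twice with ``$-,+,-$'' if twice. Chaining the three reductions, the sign condition on $\TYs{s}(x)-\TXs{s}(ax+b)$ for all $a,b$ is equivalent to the concavity of $c_s^{-1}$, hence to the convexity of $c_s$, i.e.\ to $X<_{\sIFR{s}}Y$, which proves the equivalence in both directions at once.

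The main obstacle is keeping the two sign reversals straight so that they combine to return precisely the pattern ``$+,-,+$'' claimed for $\TYs{s}(x)-\TXs{s}(ax+b)$: the decreasing map $\TXs{s}^{-1}$ flips every sign (Lemma~\ref{lem:marshall}), while passing from $c_s$ to $c_s^{-1}$ trades convexity for concavity and so turns ``$+,-,+$'' into ``$-,+,-$''; one must verify these cancel correctly rather than reinforce. A secondary technical point, which I would flag and dispose of by hand, is that the tails are extended to equal $1$ for negative arguments, so $\TXs{s}^{-1}\circ\TXs{s}$ is the identity only on $[0,+\infty)$; on the part of $[0,+\infty)$ where $ax+b<0$ one has $\TXs{s}(ax+b)=1$, whence $\TYs{s}(x)-\TXs{s}(ax+b)\le 0$ while $c_s^{-1}(x)-(ax+b)>0$, so the reverse-sign relation still holds there and the argument is not affected. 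This, together with $\TXs{s}(0)=\TYs{s}(0)=1$, is what justifies restricting the whole sign analysis to ``$x$ from $0$ to $+\infty$'' rather than to all of $\mathbb{R}$.
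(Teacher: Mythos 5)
Your proof is correct and takes essentially the same route as the paper, which states this theorem as an immediate consequence of Theorem~\ref{thm:conv} and Lemma~\ref{lem:marshall} without writing out the details. Your pivot to the inverse $c_s^{-1}=\TXs{s}^{-1}\circ\TYs{s}$ (so that the line $ax+b$ lands inside $\TXs{s}$, matching the stated form exactly) and your pointwise check on the region where $ax+b<0$ are precisely the details the paper leaves implicit.
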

\begin{remark}
We have reduced the variation of $x$ to traversing from 0 to $+\infty$ because all the functions $\TXs{s}$ and $\TYs{s}$ are equal to 1 for $x<0$.
\end{remark}
\begin{definition}
Given random variables $X$ and $Y$, we denote $V_{s}(x)=\TYs{s}(x)-\TXs{s}(ax+b)$.
\end{definition}
It is obvious from the definition of the iterated tails that $V_s$ is differentiable.

\begin{remark}
\label{a<0}
Taking into account that $\TXs{s}$ and $\TYs{s}$ are decreasing, being the tails of distributions, it is enough to consider, when applying
Theorem~\ref{convexity-equivalence}, the constant $a>0$. Indeed, we have $V(0)=1-\TXs{s}(b)$, and if $a<0$,
$$
V_{s}^\prime(x)
=-\frac{1}{\muYs{s-1}}\TYs{s-1}(x)+\frac{a}{\muXs{s-1}}\TXs{s-1}(ax+b)\leq 0.
$$
Now, it is obvious that for $a<0$, we have $\lim_{x\rightarrow+\infty}V_{s}(x)= -1$. Thus, the sign variation of $V$ will be``$+,-$'', if $b>0$, and ``$-$'' if $b\leq0$. That, in both cases we meet the convexity condition described in Theorem~\ref{thm:conv}.
\end{remark}
Finally, we prove a simple result describing the sign variation after performing integration. This will be convenient for the later discussion.
\begin{lemma}
\label{sign-integral}
Let $f$ and $g$ be two real-valued functions defined on $[0,\infty)$ such that
$$
g(x)=\int_{x}^\infty f(t)\,dt.
$$
Assume that, as $x$ traverses from $0$ to $+\infty$, $f(x)$ changes sign in one of the following orders ``$-,+$'' or ``$+,-$'' or ``$+,-,+$'' or ``$-,+,-,+$''. Then $g(x)$, as $x$ traverses from $0$ to $\infty$, has sign variation equal to every possible final part of the sign variation  for $f(x)$.
\end{lemma}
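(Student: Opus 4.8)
The plan is to exploit the two structural facts that the relation $g(x)=\int_x^\infty f(t)\,dt$ provides. Differentiating gives $g'(x)=-f(x)$, and (since $g$ is defined by a convergent improper integral) the tail of that integral satisfies $\lim_{x\to\infty}g(x)=0$. The first fact says that on every maximal interval on which $f$ keeps a constant sign, $g$ is strictly monotone: decreasing where $f>0$ and increasing where $f<0$. Consequently $g$ can vanish at most once in the interior of each such interval, while at the breakpoints separating two consecutive intervals $g$ has a local extremum, so any zero occurring there is a touching zero rather than a genuine sign change.

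Write the sign variation of $f$ as an alternating string $s_1,\dots,s_n$ over its $n$ constant-sign intervals, the last being $(x_{n-1},\infty)$ with sign $s_n$. I would then establish three facts about the sign variation of $g$. First, it is automatically an alternating $\pm$ string, since by definition a sign variation records only strict changes. Second, its final sign equals $s_n$: on $(x_{n-1},\infty)$ the function $g$ is strictly monotone with limit $0$, hence keeps the constant sign $s_n$ there and, in particular, has no zero crossing on this last interval. Third, the total number of sign changes of $g$ is at most $n-1$, because each of the $n$ constant-sign intervals of $f$ contributes at most one interior crossing, the unbounded last interval contributes none by the previous point, and the interior extrema at the breakpoints contribute none.

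These three facts close the argument by a short combinatorial observation: an alternating $\pm$ string of length $\ell\le n$ whose last symbol is $s_n$ is uniquely determined, and therefore coincides with the length-$\ell$ suffix of $s_1,\dots,s_n$. Hence the sign variation of $g$ is exactly one of the final parts of the sign variation of $f$, which is the assertion; matching this against the four listed patterns is then immediate. The part requiring the most care is the counting step, namely verifying that $g$ has no more sign changes than $f$. This rests on ruling out genuine crossings at the interior breakpoints, where $g(x_i)$ may happen to equal zero but is a local extremum and hence a touching zero, and on the observation that monotonicity together with $g(\infty)=0$ forbids a crossing on the unbounded final interval. Once the bound $n-1$ and the matching final sign are secured, the suffix conclusion follows with no further computation.
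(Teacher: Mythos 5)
Your proof is correct, and it is a genuinely more structured route than the one the paper takes. The paper disposes of this lemma in one sentence: it invokes $g'(x)=-f(x)$ and checks the four listed patterns case by case, implicitly using the same two facts you isolate (strict monotonicity of $g$ on each constant-sign interval of $f$, and $\lim_{x\to\infty}g(x)=0$, which pins the sign of $g$ on the unbounded final interval to $s_n$). What you do differently is replace the enumeration by a single structural argument: at most one crossing of $g$ per constant-sign interval of $f$, none on the final interval, only touching zeros at the breakpoints (where $g$ has a local extremum), hence at most $n-1$ sign changes in total; the last sign of $g$ equals $s_n$; and an alternating $\pm$ string of prescribed length and prescribed last symbol is unique, so the sign variation of $g$ must be the corresponding suffix of that of $f$. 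This buys two things: generality, since your argument proves the conclusion for any finite alternating sign pattern of $f$ rather than just the four listed, and rigor that the paper's one-line proof omits, in particular the explicit handling of zeros at breakpoints, which is exactly where a careless case-by-case check could miscount a touching zero as a sign change. The one hypothesis you should flag explicitly is that $g(x)=\int_x^\infty f(t)\,dt$ presupposes convergence of the improper integral, which is what legitimizes both the finiteness of $g$ and the limit $g(x)\to 0$; the paper makes the same tacit assumption.
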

\begin{proof}
The proof follows from a simple argument using that $g^\prime(x)=-f(x)$, and separating into the four possible sign variations considered. 
%
%
%
\end{proof}

We may now prove a general criterium to compare, with respect to the \sIFR{s} order, two distribution functions.
\begin{theorem}
\label{main}
Let $X$ and $Y$ be random variables with absolutely continuous distributions with densities $f_X$ and $f_Y$, and distribution functions $F_X,F_Y\in\mathcal{F}$, respectively. If, for some positive integer $k\leq s$, and every $a>0$ and $b\in\mathbb{R}$, the function
\begin{equation}
 \label{eq:H}
H_k(x)=\frac{1}{\prod_{j=1}^{k}\muYs{s-j}} \TYs{s-k}(x)-\frac{a^{k}}{\prod_{j=1}^{k}\muXs{s-j}}\TXs{s-k}(ax+b)
\end{equation}
changes sign at most twice, and if the change of signs occurs twice, it is in the order ``$+,-,+$'', as $x$ traverses from $0$ to $+\infty$, then $F_X\leq_{\sIFR{s}}F_Y$.
\end{theorem}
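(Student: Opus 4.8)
The plan is to recognise $V_s$ itself as the $k=0$ instance of the family $H_k$, and then to transport the sign-variation hypothesis on $H_k$ down the chain $H_k,H_{k-1},\dots,H_0=V_s$ by repeated integration, invoking Lemma~\ref{sign-integral} at each step. By Theorem~\ref{convexity-equivalence} together with Remark~\ref{a<0}, the conclusion $F_X\leq_{\sIFR{s}}F_Y$ is equivalent to showing that, for every $a>0$ and $b\in\mathbb{R}$, the function $V_{s}(x)=\TYs{s}(x)-\TXs{s}(ax+b)$ changes sign at most twice and, if twice, in the order ``$+,-,+$''. Setting $k=0$ in (\ref{eq:H}), with empty products equal to $1$ and $a^{0}=1$, gives exactly $H_0=V_s$, so it suffices to prove that $H_0$ has this sign variation.

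The key step is a recursion linking consecutive $H_k$. Differentiating (\ref{eq:H}) and using $\frac{d}{dx}\TYs{s-k}(x)=-\frac{1}{\muYs{s-k-1}}\TYs{s-k-1}(x)$ together with $\frac{d}{dx}\TXs{s-k}(ax+b)=-\frac{a}{\muXs{s-k-1}}\TXs{s-k-1}(ax+b)$, and absorbing the new factor into the products $\prod_{j=1}^{k}\muYs{s-j}$ and $\prod_{j=1}^{k}\muXs{s-j}$, I would obtain $H_k'(x)=-H_{k+1}(x)$ for $0\leq k\leq s-1$. Since $a>0$, each tail $\TYs{s-k}(x)$ and $\TXs{s-k}(ax+b)$ (or the density, when $s-k=0$) vanishes as $x\to+\infty$, so $H_k(+\infty)=0$; integrating the recursion then yields $H_{k-1}(x)=\int_x^\infty H_k(t)\,dt$ for each $1\leq k\leq s$.

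With these integral relations in hand, I would argue by downward induction. By hypothesis $H_k$ changes sign at most twice, in the order ``$+,-,+$'' if twice. Whenever the current function is not of constant sign, its pattern is one of ``$-,+$'', ``$+,-$'' or ``$+,-,+$'', so Lemma~\ref{sign-integral} applies and shows that the integral $\int_x^\infty$ of it has a sign variation equal to some final part (suffix) of the integrand's; a function of constant sign integrates to one of the same constant sign, which is the trivial case. Since every suffix of ``$-,+$'', ``$+,-$'' or ``$+,-,+$'' again has at most two sign changes, and exhibits the order ``$+,-,+$'' whenever it has exactly two, admissibility of the pattern is preserved at each step. Descending from $H_k$ through $H_{k-1},\dots,H_1$ to $H_0=V_s$ therefore shows that $V_s$ changes sign at most twice, and in the order ``$+,-,+$'' if twice, which by Theorem~\ref{convexity-equivalence} gives $F_X\leq_{\sIFR{s}}F_Y$.

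The main obstacle I anticipate is the careful bookkeeping of the sign patterns along the iteration: one must check that only the patterns covered by Lemma~\ref{sign-integral} (plus the trivial constant-sign case) can ever arise, and that the admissible class ``at most two changes, $+,-,+$ if two'' is genuinely closed under passing to suffixes. A secondary technical point is the justification that the boundary term at $+\infty$ vanishes, so that $H_k'=-H_{k+1}$ can be integrated into $H_{k-1}(x)=\int_x^\infty H_k(t)\,dt$; this rests on the assumption $a>0$ and on the decay of the iterated tails at infinity, and in the boundary case $s-k=0$ on the decay of the densities $f_X$ and $f_Y$.
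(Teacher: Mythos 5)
Your proposal is correct and is essentially the paper's own argument: the paper likewise reduces, via Theorem~\ref{convexity-equivalence}, to controlling the sign variation of $V_s$, writes $V_{s}(x)=\int_x^\infty\frac{(t-x)^{k-1}}{(k-1)!}H_k(t)\,dt$ using the intermediate representation from the proof of Lemma~\ref{Simple T_s} together with the change of variables $t=au+b$, and then concludes with Lemma~\ref{sign-integral}. The only difference is presentational: where the paper collapses the $k$-fold tail integration into that single weighted integral and invokes Lemma~\ref{sign-integral} once, you unroll it into the recursion $H_{j-1}(x)=\int_x^\infty H_j(t)\,dt$ and iterate the lemma with explicit suffix bookkeeping, which if anything applies Lemma~\ref{sign-integral} in exactly the unweighted form in which it is stated.
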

\begin{proof}
Remember the integral representation for $V_{s}(x)=\TYs{s}(x)-\TXs{s}(ax+b)$ obtained in the intermediate steps of the proof of Lemma~\ref{Simple T_s}:
\begin{eqnarray*}
\lefteqn{V_{s}(x)=
 \frac{1}{\prod_{j=1}^{k}\muYs{s-j}}\int_{x}^\infty \frac{(t-x)^{k-1}}{(k-1)!}\TYs{s-k}(t)\,dt} \\
 & &\qquad\qquad
 -\frac{1}{\prod_{j=1}^{k}\muXs{s-j}}\int_{ax+b}^\infty \frac{(t-(ax+b))^{k-1}}{(k-1)!}\TXs{s-k}(t)\,dt \\
 & &=\int_x^\infty\frac{(t-x)^{k-1}}{(k-1)!}H_k(t)\,dt,
\end{eqnarray*}
after an appropriate change of variable in the second integral. Now, using Theorem~\ref{convexity-equivalence} and Lemma~\ref{sign-integral} the proof is concluded.
\end{proof}
\begin{remark}
As mentioned before, in general, the explicit form of $\TXs{s}$ and $\TYs{s}$ are difficult to obtain. So, in most of the cases we will be interested in applying Theorem~\ref{main} choosing $k=s$, thus using the density functions to define $H_s$, or $k=s-1$, using the distribution functions to define $H_{s-1}$, if those are available.
\end{remark}
Following the previous remark, we have a closer look to $H_s$ and $H_{s-1}$, and the control of their sign variation. Taking into account the representation (\ref{eq:simple1}) for the iterated tails, we have
$$
H_s(x)=\frac{1}{\dE Y^{s-1}}f_Y(x)-\frac{a^s}{\dE X^{s-1}}f_X(ax+b)
$$
and
$$
H_{s-1}(x)=\frac{1}{\dE Y^{s-2}}\overline{F}_Y(x)-\frac{a^{s-1}}{\dE X^{s-2}}\overline{F}_X(ax+b).
$$
In most cases, the direct analysis of the sign variation of $H_s$ is, to say the least difficult, even for relatively simple density functions as, for example, the Gamma densities. An alternative approach to the control of this sign variation is to apply Lemma~\ref{lem:marshall}, choosing an appropriate $\zeta$ transformation. For the family of distributions we will be considering in the sequel, we shall take $\zeta(x)=\log x$.
\begin{corollary}
\label{maincor}
Let $X$ and $Y$ be random variables with absolutely continuous distributions with densities $f_X$ and $f_Y$ and distribution functions $F_X,F_Y\in\mathcal{F}$, respectively. If, for every constants $a>0$ and $b\in\mathbb{R}$, either of the functions,
$$
P_s(x)=\log f_Y(x)-\log f_X(ax+b)+\log\frac{\dE X^{s-1}}{a^s\dE Y^{s-1}},
$$
or
$$
P_{s-1}(x)=\log \overline{F}_Y(x)-\log\overline{F}_X(ax+b)+\log\frac{\dE X^{s-2}}{a^{s-1}\dE Y^{s-2}},
$$
changes sign at most twice
when $x$ traverses from $0$ to $+\infty$, and if the change of sign occurs twice it is in the order ``$+,-,+$'', then $F_X\leq_{\sIFR{s}}F_Y$.
\end{corollary}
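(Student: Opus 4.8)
The plan is to read this off directly from Theorem~\ref{main}, transforming the sign-variation hypothesis on $H_s$ (or on $H_{s-1}$) into the stated hypothesis on $P_s$ (or on $P_{s-1}$) by means of the monotone transformation $\zeta=\log$ supplied by Lemma~\ref{lem:marshall}. Since the corollary offers two alternative sufficient conditions, I would prove each separately, the two arguments being formally identical.

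First I would take $k=s$ in Theorem~\ref{main} and recall the explicit form displayed just before the corollary,
\[
H_s(x)=\frac{1}{\dE Y^{s-1}}f_Y(x)-a^{s}\cdot\frac{1}{\dE X^{s-1}}f_X(ax+b),
\]
writing it as $u(x)-c\,v(x)$ with $u(x)=\frac{1}{\dE Y^{s-1}}f_Y(x)$, $v(x)=\frac{1}{\dE X^{s-1}}f_X(ax+b)$ and $c=a^{s}>0$. Because $u$ and $v$ are nonnegative, $\zeta=\log$ is strictly increasing and continuous on their positive range, so Lemma~\ref{lem:marshall} guarantees that $u(x)-c\,v(x)$ and $\log u(x)-\log\!\big(c\,v(x)\big)$ have the same sign-variation order as $x$ traverses $[0,+\infty)$. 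The elementary computation
\[
\log u(x)-\log\!\big(c\,v(x)\big)=\log f_Y(x)-\log f_X(ax+b)+\log\frac{\dE X^{s-1}}{a^{s}\dE Y^{s-1}}=P_s(x)
\]
then identifies this log-difference as exactly $P_s$. Hence, whenever $P_s$ changes sign at most twice, and, when twice, in the order ``$+,-,+$'', the same holds for $H_s$, and Theorem~\ref{main} yields $F_X\leq_{\sIFR{s}}F_Y$. The argument for $P_{s-1}$ is the same with $k=s-1$: one uses the displayed expression for $H_{s-1}$ in terms of the tails $\overline{F}_Y,\overline{F}_X$, applies Lemma~\ref{lem:marshall} with $c=a^{s-1}$ and $\zeta=\log$, and checks that the resulting log-difference equals $P_{s-1}$.

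The only point requiring genuine care — and thus the main obstacle — is the domain of the logarithm: $\zeta=\log$ is defined only where $u$ and $v$ are strictly positive, whereas $f_X(ax+b)$ may vanish (for instance when $ax+b$ falls outside the support of $X$). I would therefore argue that restricting attention to the support of the densities (resp.\ the region where the tails are positive) loses no sign-variation information: outside that region one of the two terms vanishes and the sign of $H_s$ is forced by the nonnegativity of the surviving term, and one verifies that appending these forced signs neither introduces an additional crossing nor disturbs the ``$+,-,+$'' pattern inherited on the support. Granting this routine boundary check, the corollary is an immediate consequence of Theorem~\ref{main} and Lemma~\ref{lem:marshall}.
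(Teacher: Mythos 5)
Your proposal is correct and is precisely the argument the paper intends: the corollary is stated without proof because it follows immediately from Theorem~\ref{main} applied with $k=s$ (resp.\ $k=s-1$), the displayed expressions for $H_s$ and $H_{s-1}$ obtained from representation~(\ref{eq:simple1}), and Lemma~\ref{lem:marshall} with $\zeta(x)=\log x$, exactly as you do. Your additional care about the domain of the logarithm (where $f_X(ax+b)$ or $\overline{F}_X(ax+b)$ vanishes, the sign of $H_k$ is forced nonnegative and no extra crossings arise) is a point the paper silently omits here and only confronts later, in the applications, when handling the case $b<0$.
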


\section{Some applications}
\label{sec:app}
In this section, we will be applying the general characterizations derived before to establish the \sIFR{s} ordering among some families of distributions.

\subsection{Comparing two Gamma distributions}
As argued after Corollary~\ref{cor:scale}, it is enough to compare Gamma distributions both with the same scale parameter $\theta=1$. We will be using Corollary~\ref{maincor} with respect to $P_s$, assuming $X$ has $\Gamma(\alpha^\prime,1)$ distribution and $Y$ has $\Gamma(\alpha,1)$ distribution. Thus, we need to analyse the sign variation in $[0,+\infty)$ of
\begin{equation}\label{eq:Ps}
P_s(x)=(\alpha-1)\log x-(\alpha^\prime-1)\log (ax+b)-x+ax+b
          +\log\frac{\Gamma(\alpha^\prime)}{\Gamma(\alpha)}+\log\frac{\dE X^{s-1}}{a^{s}\dE Y^{s-1}}.
\end{equation}
where $a>0$ and $b\in\mathbb{R}$. Note that $\lim_{x\rightarrow+\infty}P_s(x)=\infty\times{\rm sgn}(a-1)$. Differentiating the expression above, we have
\begin{equation}\label{eq:Ps2}
\renewcommand{\arraystretch}{2.5}
\begin{array}{rcl}
P_s^\prime(x) & = &\displaystyle \frac{\alpha-1}{x}-\frac{a(\alpha^\prime-1)}{ax+b}+a-1 \\
 & = &\displaystyle \frac{a(a-1)x^2+((\alpha-\alpha^\prime) a+(a-1)b)x+(\alpha-1) b}{x(ax+b)}.
\end{array}
\renewcommand{\arraystretch}{1}
\end{equation}
Let us denote the numerator in the expression of $P_s^\prime$ by $N_s(x)=a(a-1)x^2+((\alpha-\alpha^\prime-) a+(a-1)b)x+(\alpha-1)b$.
To analyse the sign variation of $V_s$, we need to separate between the cases when $b\geq 0$ and $b<0$. Indeed, while for the first case we need to consider $x$ traversing from 0 to $+\infty$, for the later case, we will be only analysing the sign variation in the interval $(-\frac{b}{a},+\infty)$ as, for $x\leq-\frac{b}{a}$, $V_{s}(x)=\TYs{s}(x)-1\leq0$. Hence for both cases, in the interval of interest, the sign of $P_s^\prime$ is determined by the sign of $N_s$.
\begin{proposition}
\label{gamma_gamma1}
Let $\alpha^\prime>\alpha>1$ and $\theta_1,\theta_2>0$. The $\Gamma(\alpha^\prime,\theta_1)$ distribution is more \sIFR{s} than the $\Gamma(\alpha,\theta_2)$ distribution.
\end{proposition}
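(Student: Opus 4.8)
The plan is to reduce the comparison to unit scale and then apply the criterion of Corollary~\ref{maincor}, monitoring the sign variation of $P_s$ through the elementary quadratic $N_s$ already isolated above. Since both distributions carry a scale parameter, Corollary~\ref{cor:scale} lets me assume $\theta_1=\theta_2=1$, so I take $X\sim\Gamma(\alpha',1)$ and $Y\sim\Gamma(\alpha,1)$ and aim at $F_X\leq_{\sIFR{s}}F_Y$. By Corollary~\ref{maincor} it then suffices to control, for every $a>0$ and $b\in\mathbb{R}$, the sign changes of $P_s$ in~(\ref{eq:Ps}); recall that, via Lemma~\ref{lem:marshall} with $\zeta=\log$, these coincide with the sign changes of $H_s$, which is what the convexity of $c_s$ ultimately depends on through Theorem~\ref{main}.

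Next I would exploit that $P_s'$ has the same sign as $N_s$ on the interval of interest, so that $P_s$ has at most two turning points and hence at most three monotone pieces. The signs of the coefficients of $N_s$ are pinned down by $\alpha'>\alpha>1$: the leading coefficient $a(a-1)$ has the sign of $a-1$, the constant term $(\alpha-1)b$ has the sign of $b$, and the coefficient of $x$ carries the negative contribution $(\alpha-\alpha')a$. Splitting into the cases $a>1$, $a=1$, $a<1$ crossed with $b>0$, $b=0$, $b<0$, I would, in each, combine the number and location of the positive roots of $N_s$ (read off from the sum and product of its roots) with the boundary limits $\lim_{x\to+\infty}P_s(x)=\infty\cdot\mathrm{sgn}(a-1)$ and the left-end limit ($-\infty$ when $b>0$, and $+\infty$ when $b\le 0$, working on $(-b/a,+\infty)$ when $b<0$). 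This fixes the sign sequence of $P_s$, hence of $H_s$.

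When that sequence is already at most ``$+,-,+$'' the conclusion is immediate from Corollary~\ref{maincor}. In the remaining cases $P_s$ acquires a spurious leading sign and reads ``$-,+,-,+$'' or ``$-,+,-$'', which is not directly admissible; there I would stop arguing on $P_s$ and instead pass to $V_s$, using $V_s(x)=\int_x^\infty\frac{(t-x)^{s-1}}{(s-1)!}H_s(t)\,dt$ and Lemma~\ref{sign-integral} to deduce that the sign sequence of $V_s$ is a \emph{final segment} of that of $H_s$. The actual segment is then selected by the boundary data: $V_s(0^+)=1-\TXs{s}(b)$ is positive for $b>0$ and vanishes for $b\le 0$ (with $V_s<0$ just to the right when $b<0$), while the Gamma tail asymptotics give $V_s(+\infty)=0^{+}$ for $a>1$ and $0^{-}$ for $a\le 1$. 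Matching these collapses $V_s$ to one of the admissible shapes ``$+$'', ``$+,-$'', ``$-,+$'' or ``$+,-,+$'', whence Theorem~\ref{convexity-equivalence} gives $X\leq_{\sIFR{s}}Y$.

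The main obstacle is exactly this reconciliation in the degenerate regimes. For $a$ near $1$ with $b>0$, $N_s$ has two positive roots and $P_s$ genuinely changes sign three times, so the conclusion rests entirely on the smoothing of Lemma~\ref{sign-integral} together with the endpoint signs of $V_s$, not on Corollary~\ref{maincor} applied verbatim. The hardest single case is $a<1$, $b<0$, where $V_s$ both starts and ends negative: admissibility there is equivalent to ruling out an interior ``$-,+,-$'' excursion, i.e. to establishing the uniform tail inequality $\TYs{s}(x)\le\TXs{s}(ax+b)$ on $(-b/a,+\infty)$. Proving this inequality (or, for $s\ge 2$, routing that case through the $P_{s-1}$ branch of Corollary~\ref{maincor}) is the delicate step on which the whole argument hinges.
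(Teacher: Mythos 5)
Your strategy is the paper's own: reduce to $\theta_1=\theta_2=1$ by Corollary~\ref{cor:scale}, restrict to $a>0$ by Remark~\ref{a<0}, read the monotonicity of $P_s$ off the quadratic $N_s$, and, precisely because in some regimes $P_s$ changes sign more than twice (so Corollary~\ref{maincor} cannot be applied verbatim), pass to $V_s(x)=\int_x^\infty\frac{(t-x)^{s-1}}{(s-1)!}H_s(t)\,dt$, use Lemma~\ref{sign-integral} to see that the sign sequence of $V_s$ is a final segment of that of $H_s$ (equal to that of $P_s$ by Lemma~\ref{lem:marshall}), and let the endpoint data ($V_s(0)=1-\TXs{s}(b)\geq 0$ for $b\geq 0$, and $V_s\leq 0$ on $(0,-b/a]$ for $b<0$) select an admissible segment before invoking Theorem~\ref{convexity-equivalence}. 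Your treatment of $b\geq 0$, for both $a>1$ and $a\leq 1$, matches the paper's Cases 1 and 2.

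The gap is that you leave the case $a\leq 1$, $b<0$ unproven: you assert it ``hinges'' on the uniform inequality $\TYs{s}(x)\leq\TXs{s}(ax+b)$ on $(-\frac{b}{a},+\infty)$, to be established separately (and your fallback through $P_{s-1}$ is unavailable here, since the Gamma tail $\overline{F}_X$ has no closed form). In fact no new ingredient is needed; the quadratic analysis you already set up closes this case, and this is exactly how the paper's Case 4 runs. For $a<1$ the parabola $N_s$ is concave with $N_s(0)=(\alpha-1)b<0$, and if it has two positive roots its vertex sits at $-\frac{b}{2a}+\frac{\alpha-\alpha'}{2(1-a)}<-\frac{b}{a}$; hence $N_s$ is decreasing on $(-\frac{b}{a},+\infty)$, so its sign there is ``$+,-$'' or ``$-$'' (for $a=1$, $N_s$ is linear with negative slope and negative intercept, so its sign is ``$-$'' directly). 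Since $\lim_{x\rightarrow(-b/a)^+}P_s(x)=+\infty$, the function $P_s$ cannot be increasing immediately to the right of $-\frac{b}{a}$ --- an increasing function cannot have limit $+\infty$ at the left end of its interval --- so the sign of $N_s$ is ``$-$'' throughout $(-\frac{b}{a},+\infty)$, and $P_s$ decreases from $+\infty$ to $-\infty$: its sign variation is ``$+,-$''. Lemma~\ref{sign-integral} then leaves only ``$+,-$'' or ``$-$'' for $V_s$ on $(-\frac{b}{a},+\infty)$, and since $V_s(-\frac{b}{a})=\TYs{s}(-\frac{b}{a})-1<0$, only ``$-$'' survives. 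Combined with $V_s\leq 0$ on $(0,-\frac{b}{a}]$, this yields $V_s\leq 0$ on $(0,+\infty)$, which is exactly the tail inequality you postponed. With this case filled in, your argument is complete and coincides with the paper's proof.
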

\begin{proof}
Taking into account Corollary~\ref{cor:scale}, we may assume $\theta_1=\theta_2=1$.
Moreover, remember that, according to Remark~\ref{a<0}, it is enough to take $a>0$. Note still that $\lim_{x\rightarrow 0^+}P_s(x)=-\infty$.

Assume first that $b\geq0$. The convexity of $N_s$ is determined by the sign of $a-1$, and $N_s(0)=(\alpha-1)b\geq0$, so the behaviour of $P_s^\prime$ may be as follows:
\begin{center}
\begin{tabular}{ccc}
\begin{tikzpicture}[below,scale=.5]
\draw[->] (-.5,0) -- (7.5,0) node [below] {\small $x$};
\draw[->] (0,-2.5) -- (0,4.0);
\draw (.2,2.5) to [out=280,in=135] (1.5,0) to [out=315,in=170] (2.5,-1.0) [out=5,in= 225] to (3.5,0) to [out=45,in=183] (5.75,1);
\end{tikzpicture}
 & \qquad\qquad &
\begin{tikzpicture}[below,scale=.5]
\draw[->] (-.5,0) -- (7.5,0) node [below] {\small $x$};
\draw[->] (0,-2.5) -- (0,4.0);
\draw (.2,2.5) to [out=280,in=135] (1.5,0) to [out=315,in=170] (2.5,-1.5) [out=5,in= 225] to (3.5,-1.0) to [out=45,in=183] (5.75,-.5);
\end{tikzpicture} \\
 & & \\
{\small $P_s^\prime$ for $a>1$} & & {\small $P_s^\prime$ for $a<1$}
\end{tabular}
\end{center}%
\begin{description}
    \item[{\rm\textit{Case 1} ($a>1$)}] We have $\lim_{x\rightarrow+\infty}P_s(x)=+\infty$, thus the most sign varying situation corresponds to ``$-,+,-,+$'' implying, based on\linebreak Lemma~\ref{sign-integral}, that the sign variation of $V_s$ might be ``$-,+,-,+$'' or ``$+,-,+$'' or ``$-,+$'' or ``$+$''. Now, as $V_s(0)=1-\TXs{s}(b)\geq0$, the only possible cases are ``$+,-,+$'' or ``$+$''.
    \item[{\rm\textit{Case 2} ($a\leq1$)}] In this case we have $\lim_{x\rightarrow+\infty}P_s(x)=-\infty$. The behaviour of $P_s^\prime$ when $a=1$ is still described by the picture on the right, with $P_s^\prime$ approaching 0 as $x\longrightarrow+\infty$, instead of being strictly negative. Taking into account this behaviour of $P_s^\prime$ the monotonicity of $P_s$ is, $\nearrow\searrow$, meaning that the most sign varying case for $P_s$ is ``$-,+,-$''. Again, based on Lemma~\ref{sign-integral} and $V_s(0)\geq 0$, the only possible sign variation is ``$+,-$''.
    \end{description}

\smallskip

\noindent
Assume now that $b<0$. Then, we have, for $x\leq-\frac{b}{a}$, $V_{s}(x)=\TYs{s}(x)-1\leq0$, so it remains to describe the sign variation for $x>-\frac{b}{a}$, thus needing to locate $-\frac{b}{a}$ with respect to the roots of $N_{s}$. As $N_s(0)=(\alpha-1)b<0$, two situations may occur:
    \begin{description}
    \item[{\rm\textit{Case 3} ($a>1$)}]  Then, the sign variation of $N_s(x)$ in $(-\frac{b}{a},+\infty)$ is either ``$+$'' or ``$-,+$''. As $\lim_{x\rightarrow(-b/a)^+}P_s(x)=+\infty$ and $\lim_{x\rightarrow+\infty}P_s(x)=+\infty$, it follows that the sign variation of $N_s(x)$ is ``$-,+$''. Thus, the most sign varying possibility for $P_s$ in the interval $(-\frac{b}{a},+\infty)$ is ``$+,-,+$''. From Lemma~\ref{sign-integral}, it follows that the sign variation for $V_s$ in $(-\frac{b}{a},+\infty)$ is one of the three possibilities: ``$+,-,+$'' or ``$-,+$'' or ``$+$''. As $V_s(-\frac{b}{a})\leq 0$, it follows that the sign variation of $V_s$ in $(0,+\infty)$ is ``$-,+$''.

    \item[{\rm\textit{Case 4} ($a\leq1$)}]
     In this case, the sign variation of $N_s(x)$ in the interval $(-\frac{b}{a},+\infty)$ is either ``$-$'', or ``$-,+,-$''. Assume first that the sign variation of $N_s(x)$ is ``$-,+,-$'', which means that $N_s$ has two positive roots and  its maximum is reached for $x=-\frac{b}{2a}+\frac{\alpha-\alpha^\prime}{2(1-a)}<-\frac{b}{a}$, therefore, the sign variation of $N_{s}$ in the interval $(-\frac{b}{a},+ \infty)$ is ``$+,-$'' or ``$-$''. As $\lim_{x\rightarrow(-b/a)^+}P_s(x)=+\infty$, there is only one possible sign variation of $N_{s}$, which is ``$-$''. Hence, the sign variation of $P_{s}$ is, at most, ``$+,-$''. Now, using the fact that $V_s(-\frac{b}{a})\leq 0$, it follows the sign variation of $V_s$ in $[0,+\infty)$ is ``$-$''. It remains to analyse the case where $N_s$ is always negative, but the description of the sign variation of $V_s$ follows in the same way.
    \end{description}
So, finally, the possibilities for the sign variation of $V_s$ are: either at most one sign change or, in case of two sign changes, these are ``$+,-,+$''. Hence, the conclusion follows taking into account Theorem~\ref{convexity-equivalence}.
\end{proof}
\begin{proposition}
\label{gamma_gamma2}
Let $\alpha^\prime>1>\alpha>0$ and $\theta_1,\theta_2>0$. The $\Gamma(\alpha^\prime,\theta_1)$ distribution is more \sIFR{s} than the $\Gamma(\alpha,\theta_2)$ distribution.
\end{proposition}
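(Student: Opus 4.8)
The plan is to run the argument of Proposition~\ref{gamma_gamma1} again, the only structural change being the sign of $\alpha-1$. By Corollary~\ref{cor:scale} I may assume $\theta_1=\theta_2=1$, so that $X\sim\Gamma(\alpha^\prime,1)$ and $Y\sim\Gamma(\alpha,1)$, and I apply the machinery behind Corollary~\ref{maincor} to $P_s$ given by (\ref{eq:Ps}); by Remark~\ref{a<0} it suffices to take $a>0$. The crucial difference from Proposition~\ref{gamma_gamma1} is that now $\alpha<1$, which flips two ingredients in the analysis of (\ref{eq:Ps}) and (\ref{eq:Ps2}): the term $(\alpha-1)\log x$ now diverges to $+\infty$, so that $\lim_{x\to0^+}P_s(x)=+\infty$, and the boundary value $N_s(0)=(\alpha-1)b$ reverses sign. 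As before I split into $b\geq0$ and $b<0$.

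First I would treat $b\geq0$, where $N_s(0)=(\alpha-1)b\leq0$. If $a>1$, then $N_s$ is convex with $N_s(0)\leq0$ and $N_s(x)\to+\infty$, which forces its sign variation on $(0,+\infty)$ to be ``$-,+$''; hence $P_s$ decreases and then increases, running from $+\infty$ to $+\infty$, so its sign variation is at most ``$+,-,+$''. If $a\leq1$, I would check that the vertex of $N_s$, located at $-\frac{b}{2a}+\frac{\alpha-\alpha^\prime}{2(1-a)}$, is negative, so that $N_s$ is decreasing and $\leq0$ on all of $[0,+\infty)$; then $P_s$ is monotone decreasing from $+\infty$ to $-\infty$, with sign variation ``$+,-$''. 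In either subcase the sign variation of $P_s$ is already no worse than ``$+,-,+$''; passing to $V_s$ through the integral representation of Theorem~\ref{main} and Lemma~\ref{sign-integral}, and using $V_s(0)=1-\TXs{s}(b)\geq0$ to discard a possible leading ``$-$'', leaves $V_s$ of type ``$+,-,+$'' or ``$+$'' (when $a>1$) and ``$+,-$'' (when $a\leq1$).

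Next I would treat $b<0$. Here $V_s(x)=\TYs{s}(x)-1\leq0$ for $x\leq-\frac{b}{a}$, so only the interval $(-\frac{b}{a},+\infty)$ needs attention, and on that interval the sign of $\alpha-1$ is immaterial: the point $0$ lies outside it, the boundary value is $N_s(-\frac{b}{a})=(\alpha^\prime-1)b<0$, and $\lim_{x\to(-b/a)^+}P_s(x)=+\infty$, exactly as in Proposition~\ref{gamma_gamma1}. Consequently the subcases $a>1$ and $a\leq1$ go through verbatim, producing $V_s$ of type ``$-,+$'' (for $a>1$) or ``$-$'' (for $a\leq1$) once the nonpositive stretch on $[0,-\frac{b}{a}]$ is spliced in.

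Assembling the cases, $V_s$ changes sign at most once, or exactly twice in the order ``$+,-,+$'', so Theorem~\ref{convexity-equivalence} yields $F_X\leq_{\sIFR{s}}F_Y$, and Corollary~\ref{cor:scale} then restores the general scale parameters, proving that $\Gamma(\alpha^\prime,\theta_1)$ is more \sIFR{s} than $\Gamma(\alpha,\theta_2)$. The step needing the most care is the sign bookkeeping for the quadratic $N_s$ on the interval of interest: verifying that $N_s(0)\leq0$ forces the pattern ``$-,+$'' when $a>1$, that the vertex is negative when $a\leq1$, and that the reversed near-origin limit of $P_s$ combines correctly with $V_s(0)\geq0$. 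I do not expect a genuinely new difficulty; because the divergence of $P_s$ near $0$ is now toward $+\infty$, the worst-case number of sign changes is smaller than in Proposition~\ref{gamma_gamma1}, so this case is, if anything, milder.
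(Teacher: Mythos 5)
Your proof is correct, but it takes a genuinely different route from the paper. The paper disposes of this proposition in one line: since $\alpha^\prime>1$, Theorem~\ref{Gamma:sIFR} gives that $\Gamma(\alpha^\prime,\theta_1)$ is \sIFR{s}, hence by Theorem~\ref{thm:expon} it is more \sIFR{s} than an exponential distribution; since $\alpha<1$, $\Gamma(\alpha,\theta_2)$ is \sDFR{s}, hence the exponential is more \sIFR{s} than it; transitivity of the order (Lemma~\ref{partial}) concludes. The exponential acts as a pivot sitting between the two regimes, which is precisely why the mixed case $\alpha^\prime>1>\alpha$ is trivial once Section~\ref{sec:mon} is available. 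Your route instead redoes the sign-variation analysis of (\ref{eq:Ps})--(\ref{eq:Ps2}) with $\alpha<1$, and your bookkeeping checks out: for $b\geq0$ and $a>1$, convexity of $N_s$ with $N_s(0)=(\alpha-1)b\leq0$ does force the pattern ``$-,+$''; for $a<1$ and $b\geq 0$ the vertex $-\frac{b}{2a}+\frac{\alpha-\alpha^\prime}{2(1-a)}$ is indeed negative, so $N_s\leq 0$ on $[0,+\infty)$ (for $a=1$ the parabola degenerates to a decreasing affine function with $N_s(0)\leq 0$, a case your vertex formula silently excludes but which gives the same conclusion); and for $b<0$ the boundary value $N_s(-\frac{b}{a})=(\alpha^\prime-1)b<0$ is correct, so together with the convexity or concavity of $N_s$ the analysis of Cases 3 and 4 of Proposition~\ref{gamma_gamma1} carries over, since the problematic sign $N_s(0)=(\alpha-1)b>0$ lies outside the interval of interest. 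What your approach buys: it is self-contained within the sign-variation machinery of Section~\ref{sec:ord}, needing neither Theorem~\ref{Gamma:sIFR} nor Theorem~\ref{thm:expon}, and it substantiates your closing observation that the mixed case produces strictly fewer worst-case sign changes than Proposition~\ref{gamma_gamma1}. What it costs: a page of case analysis where three lines suffice, given results the paper has already proved.
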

\begin{proof}
The result follows immediately using Theorem~\ref{thm:expon} and the transitivity of the \sIFR{s}-order, by comparing both of them with the exponential distribution.
\end{proof}

\subsection{Comparing two Weibull distributions}
On the sequel, we shall denote by $W(\alpha,\theta)$ the Weibull distribution with shape parameter $\alpha$ and scale parameter $\theta$. As for the Gamma family of distributions, it is enough to compare Weibull distributions both with scale parameter $\theta=1$. Moreover, we will apply Corollary~\ref{maincor} now with respect to $P_{s-1}$, as the tail of a Weibull distribution has a simple closed form representation, assuming that $X$ has distribution $W(\alpha^\prime,1)$ and $Y$ has distribution $W(\alpha,1)$. So, we are interested in analysing the sign variation of
\begin{equation}
\label{eq:Weib1}
P_{s-1}(x)=-x^\alpha+(ax+b)^{\alpha^\prime}+\log\frac{\dE X^{s-2}}{a^{s-1}\dE Y^{s-2}},
\end{equation}
where $a>0$ and $b\in\mathbb{R}$. Differentiating this expression, we have
\begin{equation}
\label{eq:Weib2}
P_{s-1}^\prime=a\alpha^\prime(ax+b)^{\alpha^\prime-1}-\alpha x^{\alpha-1}.
\end{equation}
The direct control of the sign variation of $P_{s-1}^\prime$ is too difficult, so we will use again Lemma~\ref{lem:marshall} with the choice $\zeta(x)=\log x$. This means that the sign variation of $P_{s-1}^\prime$ is the same as the sign variation of
\begin{equation}
\label{eq:Weib3}
\renewcommand{\arraystretch}{1.5}
\begin{array}{rcl}
Q_{s-1}(x)&=&\log(a\alpha^\prime(ax+b)^{\alpha^\prime-1})-\log(\alpha x^{\alpha-1}) \\
 &=&\log(a\alpha^\prime)-\log\alpha+(\alpha^\prime-1)\log(ax+b)-(\alpha-1)\log x,
\end{array}
\renewcommand{\arraystretch}{1}
\end{equation}
whose derivative is
\begin{equation}
\label{eq:Weib4}
Q_{s-1}^\prime(x)=\frac{a(\alpha^\prime-1)}{ax+b}-\frac{\alpha-1}{x}
=\frac{a(\alpha^\prime-\alpha)x+(1-\alpha)b}{x(ax+b)}.
\end{equation}

\begin{proposition}
\label{Weib_Weib1}
Let $\alpha^\prime>\alpha>1$ and $\theta_1,\theta_2>0$. The $W(\alpha^\prime,\theta_1)$ distribution is more \sIFR{s} than the $W(\alpha,\theta_2)$ distribution.
\end{proposition}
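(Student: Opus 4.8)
The plan is to follow exactly the scheme used in Proposition~\ref{gamma_gamma1}, but now working with $P_{s-1}$ and its logarithmic companion $Q_{s-1}$. By Corollary~\ref{cor:scale} I may assume $\theta_1=\theta_2=1$, so that $X\sim W(\alpha^\prime,1)$ and $Y\sim W(\alpha,1)$, and by Remark~\ref{a<0} it suffices to treat $a>0$. The objective, through Theorem~\ref{convexity-equivalence}, is to show that $V_s(x)=\TYs{s}(x)-\TXs{s}(ax+b)$ changes sign at most twice, and in the order ``$+,-,+$'' when it changes twice. Since the sign of $P_{s-1}$ coincides with that of $H_{s-1}$ (Lemma~\ref{lem:marshall} with $\zeta=\log$), and $V_s$ is recovered from $H_{s-1}$ by the integration displayed in the proof of Theorem~\ref{main}, I will first control the sign variation of $P_{s-1}$ and then transfer it to $V_s$ via Lemma~\ref{sign-integral} together with a boundary value of $V_s$.

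The heart of the argument is the monotonicity of $P_{s-1}$. By Lemma~\ref{lem:marshall} the sign of $P_{s-1}^\prime$ in \eqref{eq:Weib2} equals the sign of $Q_{s-1}$ in \eqref{eq:Weib3}, and by \eqref{eq:Weib4} the sign of $Q_{s-1}^\prime$ is governed by the numerator $a(\alpha^\prime-\alpha)x+(1-\alpha)b$, a line with positive slope because $\alpha^\prime>\alpha$. I will split according to the sign of $b$. When $b>0$, this numerator vanishes once with sign pattern ``$-,+$'', so $Q_{s-1}$ first decreases then increases; since $Q_{s-1}(x)\to+\infty$ as $x\to 0^+$ (because $\alpha>1$) and as $x\to+\infty$ (because $\alpha^\prime>\alpha$), the sign of $Q_{s-1}$, hence of $P_{s-1}^\prime$, is ``$+$'' or ``$+,-,+$''. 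Consequently $P_{s-1}$ is increasing or has the shape $\nearrow\searrow\nearrow$, whose most sign-varying possibility is ``$-,+,-,+$''. The degenerate case $b=0$ instead yields $Q_{s-1}(0^+)=-\infty$ and the simpler shape $\searrow\nearrow$, and is treated together with the case $b<0$.

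To finish the case $b\geq 0$ I will invoke Lemma~\ref{sign-integral}: the sign variation of $V_s$ must be a final segment of that of $H_{s-1}$, hence one of ``$-,+,-,+$'', ``$+,-,+$'', ``$-,+$'' or ``$+$''. Since $V_s(0)=1-\TXs{s}(b)\geq 0$, the patterns beginning with ``$-$'' are excluded, leaving ``$+,-,+$'' or ``$+$'', both admissible for Theorem~\ref{convexity-equivalence}. For $b<0$ I will restrict to $x>-\frac{b}{a}$, since $V_s(x)=\TYs{s}(x)-1\leq 0$ for $x\leq -\frac{b}{a}$; evaluating the numerator of $Q_{s-1}^\prime$ at $x=-\frac{b}{a}$ gives $b(1-\alpha^\prime)>0$, so the numerator is positive on all of $(-\frac{b}{a},+\infty)$, whence $Q_{s-1}$ increases from $-\infty$ to $+\infty$, $P_{s-1}$ has the shape $\searrow\nearrow$, and its sign variation is at most ``$+,-,+$''. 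Combining $V_s(-\frac{b}{a})=\TYs{s}(-\frac{b}{a})-1\leq 0$ with Lemma~\ref{sign-integral} forces the sign variation of $V_s$ on $(-\frac{b}{a},+\infty)$ to be ``$-,+$'', and prepending the ``$-$'' region $[0,-\frac{b}{a}]$ keeps the overall pattern ``$-,+$''.

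The main obstacle I anticipate is precisely the case $b\geq 0$, where $P_{s-1}$ can genuinely display the apparently forbidden pattern ``$-,+,-,+$''. This is why Corollary~\ref{maincor} cannot be applied to $P_{s-1}$ directly, and one must descend to $V_s$ and exploit the sign of $V_s$ at the endpoint to discard the inadmissible possibilities. The remaining ingredients — the two boundary limits of $Q_{s-1}$ and the endpoint values $V_s(0)$ and $V_s(-\frac{b}{a})$ — are routine, and the conclusion for general scale parameters $\theta_1,\theta_2$ then follows from Corollary~\ref{cor:scale}.
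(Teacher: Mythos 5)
Your proposal follows the paper's own proof essentially step by step: the reduction to $\theta_1=\theta_2=1$ via Corollary~\ref{cor:scale}, the restriction to $a>0$ via Remark~\ref{a<0}, the use of $P_{s-1}$ and $Q_{s-1}$ from (\ref{eq:Weib1})--(\ref{eq:Weib4}), the split into $b\geq 0$ and $b<0$, and the transfer of the sign variation of $H_{s-1}$ to $V_s$ through Lemma~\ref{sign-integral} combined with the endpoint values $V_s(0)\geq 0$ and $V_s(-\frac{b}{a})\leq 0$, ending with Theorem~\ref{convexity-equivalence}. The case $b<0$ is handled exactly as in the paper; your evaluation of the numerator of $Q_{s-1}^\prime$ at $x=-\frac{b}{a}$ replaces the paper's observation that this numerator is positive for all $x>0$ when $b\leq 0$, and both give the same conclusion.

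There is, however, one inference in the case $b\geq 0$ that does not stand as written. From the shape $\nearrow\searrow\nearrow$ of $P_{s-1}$ you conclude that its sign variation must be a final segment of ``$-,+,-,+$''. That does not follow from the shape alone: a function of that shape can end negative, with sign pattern ``$+,-$'' or, critically, ``$-,+,-$''. The pattern ``$-,+,-$'' is not among the four covered by Lemma~\ref{sign-integral}, and if $V_s$ inherited it, the hypothesis of Theorem~\ref{convexity-equivalence} would fail; so it must be ruled out, not merely noted to be less sign-varying than ``$-,+,-,+$''. The exclusion is exactly what the paper supplies at this point: since $\alpha^\prime>\alpha$, formula (\ref{eq:Weib1}) gives $\lim_{x\rightarrow+\infty}P_{s-1}(x)=+\infty$, so every possible sign pattern of $P_{s-1}$ must end in ``$+$'', and only then are the possibilities exactly ``$+$'', ``$-,+$'', ``$+,-,+$'', ``$-,+,-,+$''. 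Your limits for $Q_{s-1}$ essentially contain this fact (from $Q_{s-1}(x)\to+\infty$ together with $\alpha x^{\alpha-1}\to+\infty$ one gets $P_{s-1}^\prime(x)\to+\infty$, hence $P_{s-1}(x)\to+\infty$), but that inference is never drawn; adding this single line closes the gap. In the case $b<0$ the same omission is harmless, because the patterns you did not explicitly exclude there (``$+,-$'' and ``$-$'') are covered by Lemma~\ref{sign-integral} and lead to sign variations of $V_s$ that are admissible anyway.
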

\begin{proof}
As before, without loss of generality, we may take $\theta_1=\theta_2=1$ and use the representations (\ref{eq:Weib1})--(\ref{eq:Weib4}). As usual, we need to separate the cases $b>0$ and $b\leq 0$, and remember that we need only to assume that $a>0$.

\smallskip

\noindent
Assume first that $b>0$. It follows from (\ref{eq:Weib4}) that the sign variation in the interval $(0,+\infty)$ for $Q_{s-1}^\prime$ is ``$-,+$'', hence the monotonicity of $Q_{s-1}$, in this same interval, is $\searrow\nearrow$. From (\ref{eq:Weib3}), it follows that $\lim_{x\rightarrow0^+}Q_{s-1}(x)=+\infty$ and $\lim_{x\rightarrow+\infty}Q_{s-1}(x)=+\infty$, so the most sign varying possibility for $Q_{s-1}$, which coincides with the sign variation of $P_{s-1}^\prime$, is ``$+,-,+$''. It follows that the monotonicity of $P_{s-1}$ is $\nearrow\searrow\nearrow$. Note that $\alpha^\prime>\alpha$ implies $\lim_{x\rightarrow+\infty}P_{s-1}(x)=+\infty$. Hence, the most sign varying possibility in the interval $(0,+\infty)$ for $P_{s-1}$ is ``$-,+,-,+$''. Taking now into account Lemma~\ref{sign-integral}, the sign variation of $V_s$ in the interval $(0,+\infty)$ may be ``$-,+,-,+$'' or  ``$+,-,+$'' or ``$-,+$'' or ``$+$'', and remembering that $V_{s}(0)\geq 0$, the actual possible choices are ``$+,-,+$'' or ``$+$''.

\smallskip

\noindent
Assume now that $b\leq 0$. As explained before, we need only to describe the sign variation in $(-\frac{b}{a},+\infty)$. Now from (\ref{eq:Weib4}) it follows that, for $x>0$, we have  $Q_{s-1}^\prime(x)>0$, so $Q_{s-1}$ is always increasing in $(0,+\infty)$. As $\alpha^\prime>\alpha>1$, it follows that $\lim_{x\rightarrow(-b/a)^+}Q_{s-1}(x)=-\infty$ and $\lim_{x\rightarrow+\infty}Q_{s-1}(x)=+\infty$, hence the sign variation of $Q_{s-1}$ in $(-\frac{b}{a},+\infty)$, which is equal to the sign variation of $P_{s-1}^\prime$, is ``$-,+$'', thus the monotonicity of $P_{s-1}$ is $\searrow\nearrow$. We have $\lim_{x\rightarrow+\infty}P_{s-1}(x)=+\infty$, so, if $P_{s-1}(-\frac{b}{a})>0$, the most sign varying possibility is ``$+,-,+$'', while if $P_{s-1}(-\frac{b}{a})<0$, the most sign varying possibility is ``$-,+$''. In either case, taking into account Lemma~\ref{sign-integral}, the sign variation possibilities in $(-\frac{b}{a},+\infty)$ for $V_{s}$ are ``$+,-,+$'' or ``$-,+$'' or ``$+$''. As now, $V_{s}(-\frac{b}{a})\leq 0$, the actual sign variation for $V_{s}$ is ``$-,+$''.

\smallskip

\noindent
So, finally, the possible sign variations for $V_{s}$ as $x$ traverses from 0 to $+\infty$ are ``$+,-,+$'' or ``$-,+$'' or ``$+$'', so applying Theorem~\ref{convexity-equivalence}, the proof is concluded.
\end{proof}

\begin{proposition}
\label{Weib_Weib2}
Let $\alpha^\prime>1>\alpha>0$ and $\theta_1,\theta_2>0$. The $W(\alpha^\prime,\theta_1)$ distribution is more \sIFR{s} than the $W(\alpha,\theta_2)$ distribution.
\end{proposition}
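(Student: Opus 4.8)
The plan is to reduce this to a comparison with the exponential distribution, exactly as in the proof of Proposition~\ref{gamma_gamma2}, rather than attempting the direct sign-variation analysis of $P_{s-1}$ and $Q_{s-1}$ carried out for Proposition~\ref{Weib_Weib1}. The key observation is that the parameter ranges $\alpha^\prime>1$ and $0<\alpha<1$ place the two shape parameters on opposite sides of the value $1$, which is precisely where the ageing behaviour of the Weibull family switches according to Theorem~\ref{weib:sIFR}. This suggests inserting the exponential distribution, whose failure rate is constant, between the two Weibull distributions and then invoking transitivity of the order.

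First I would apply Corollary~\ref{cor:scale} to assume without loss of generality that $\theta_1=\theta_2=1$, writing $X$ for the $W(\alpha^\prime,1)$ variable and $Y$ for the $W(\alpha,1)$ variable. Next, by Theorem~\ref{weib:sIFR}, since $\alpha^\prime\geq 1$ the variable $X$ is \sIFR{s}, while since $\alpha<1$ the variable $Y$ is \sDFR{s}. Letting $E$ denote an exponential random variable, Theorem~\ref{thm:expon} then gives $X\leq_{\sIFR{s}}E$ (because $X$ is \sIFR{s}) and $E\leq_{\sIFR{s}}Y$ (because $Y$ is \sDFR{s}).

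Finally, since $\leq_{\sIFR{s}}$ is a genuine order relation on the equivalence classes of $\mathcal{F}$ by Lemma~\ref{partial}, it is in particular transitive, so chaining $X\leq_{\sIFR{s}}E\leq_{\sIFR{s}}Y$ yields $X\leq_{\sIFR{s}}Y$; reinstating the scale parameters via Corollary~\ref{cor:scale} gives the stated conclusion. I do not expect any genuine obstacle here: unlike Proposition~\ref{Weib_Weib1}, this case never requires controlling the sign variation of $V_s$ directly, because the exponential sits between the two distributions and absorbs all the analytic work already done in Theorem~\ref{thm:expon} and Theorem~\ref{weib:sIFR}. The only point deserving a line of care is checking that the exponential is comparable to each Weibull in the correct direction, but this is immediate from the monotonicity of the two failure rates recorded in Theorem~\ref{weib:sIFR}.
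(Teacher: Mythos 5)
Your proposal is correct and is exactly the paper's argument: the paper proves Proposition~\ref{Weib_Weib2} by the same route as Proposition~\ref{gamma_gamma2}, namely sandwiching the exponential between the two Weibull distributions via Theorem~\ref{thm:expon} (using the \sIFR{s}/\sDFR{s} dichotomy of Theorem~\ref{weib:sIFR}) and then applying transitivity of the \sIFR{s}-order. Your write-up merely makes explicit the details the paper leaves implicit.
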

\begin{proof}
The argument is the same as that of the proof of Proposition~\ref{gamma_gamma2}.
\end{proof}

\end{document}